\numberwithin{equation}{section} \theoremstyle{plain}
\newcommand{\Complex}{\mathbb C}
\newcommand{\Real}{\mathbb R}
\newcommand{\ddbar}{\overline\partial}
\newcommand{\pr}{\partial}
\newcommand{\ol}{\overline}
\newcommand{\Td}{\widetilde}
\newcommand{\norm}[1]{\left\Vert#1\right\Vert}
\newcommand{\abs}[1]{\left\vert#1\right\vert}
\newcommand{\set}[1]{\left\{#1\right\}}
\newcommand{\To}{\rightarrow}
\newtheorem{theorem}{Theorem}[section]
\newtheorem{definition}[theorem]{Definition}
\newtheorem{proposition}[theorem]{Proposition}
\newtheorem{corollary}[theorem]{Corollary}
\newtheorem{lemma}[theorem]{Lemma}
\newtheorem{remark}[theorem]{Remark}
\begin{document}

\title[Equidistribution theorems on strongly pseudoconvex domains]
{Equidistribution theorems on strongly pseudoconvex domains}

\author{Chin-Yu Hsiao}

\address{Institute of Mathematics, Academia Sinica and National Center for Theoretical Sciences, Astronomy-Mathematics Building, No. 1, Sec. 4, Roosevelt Road, Taipei 10617, Taiwan}
\thanks{The first author was partially supported by Taiwan Ministry of Science and Technology project 104-2628-M-001-003-MY2 , the Golden-Jade fellowship of Kenda Foundation and Academia Sinica Career Development Award. This work was initiated when the second author was visiting the Institute of Mathematics at Academia Sinica in the summer of 2016. The second author would like to thank the Institute of Mathematics at Academia Sinica for its hospitality and financial support during his stay. The second author was also supported by Taiwan Ministry of Science and Technology project 105-2115-M-008-008-MY2}

\email{chsiao@math.sinica.edu.tw or chinyu.hsiao@gmail.com}

\author{Guokuan Shao}

\address{Institute of Mathematics, Academia Sinica, Astronomy-Mathematics Building, No. 1, Sec. 4, Roosevelt Road, Taipei 10617, Taiwan}

\email{guokuan@gate.sinica.edu.tw}
\keywords{Szeg\H{o} Holomorphic function, CR function, equidistribution, zero current, Bergman kernel, Szeg\H{o} kernel, Kohn Laplacian} 
\subjclass[2000]{32V20, 32V10, 32W10, 32U40, 32W10}
	
\begin{abstract}
This work consists of two parts. In the first part, we consider a compact connected strongly pseudoconvex CR manifold $X$ with a transversal CR $S^{1}$ action. We establish an equidistribution theorem on zeros of CR functions. The main techniques involve a uniform estimate of Szeg\H{o} kernel on $X$. 

In the second part, we consider a general complex manifold $M$ with a strongly pseudoconvex boundary $X$.  By using classical result of Boutet de Monvel-Sj\"ostrand about Bergman kernel asymptotics,  we establish an equidistribution theorem on zeros of holomorphic functions on $\ol M$. 
\end{abstract}
	
	\maketitle \tableofcontents
	

\section{Introduction and statement of the main results }\label{s-gue170727}

The study of equidistribution of zeros of holomorphic sections has become intensively active in recent years. Shiffman-Zelditch \cite{sz} established an equidistribution property for high powers of a positive line bundle. Dinh-Sibony \cite{ds} extended the equidistribution with estimate of convergence speed and applied to general measures. More results about equidistribution of zeros of holomorphic sections in different cases, such as line bundles with singular metrics, general base spaces, general measures, were obtained in \cite{cm,cmm,cmn,dmm,dms,sh1,sh2}. Important methods to study equidistribution include uniform estimates for Bergman kernel functions \cite{mm, t} and techniques for complex dynamics in higher dimensions \cite{fs}. Our article is the first to study equidistribution on CR manifolds and on complex manifolds with boundary. 
In the first part, we establish an equidistribution theorem on zeros of CR functions. The proof involves uniform estimates for Szeg\H{o} kernel functions \cite{hhl}.
In the second part, we consider a general complex manifold $M$ with a strongly pseudoconvex boundary $X$ and we establish an equidistribution theorem on zeros of holomorphic functions on $\ol M$ by using classical result of Boutet de Monvel-Sj\"ostrand~\cite{BouSj76}.  

We now state our main results. We refer to Section~\ref{s:prelim} for some notations and terminology used here. 
Let $(X, T^{1,0}X)$ be a compact connected strongly pseudoconvex CR manifold with a transversal CR $S^{1}$ action $e^{i\theta}$ (cf. Section 2), where $T^{1,0}X$ is a CR structure of $X$. The dimension of $X$ is $2n+1$, $n\geq 1$. Denote by $T\in C^{\infty}(X, TX)$ the real vector field induced by the $S^{1}$ action. Take a $S^1$ invariant  Hermitian metric $\langle\,\cdot\,|\,\cdot\,\rangle$ on $\mathbb{C}TX$ such that there is an orthogonal decomposition
$\mathbb{C}TX=T^{1,0}X\oplus T^{0,1}X\oplus\mathbb{C}T$.
Then there exists a natural global $L^{2}$ inner product $(\,\cdot\,|\,\cdot\,)$ on $C^{\infty}(X)$ induced by $\langle\cdot|\cdot\rangle$. 

For every $q\in\mathbb N$, put
\begin{equation*}
X_q:=\{x\in X: e^{i\theta}\circ x\neq x, \forall\theta\in(0,\frac{2\pi}{q}),\ e^{i\frac{2\pi}{q}}\circ x=x\}.
\end{equation*}
Set $p:=\min\{q\in\mathbb{N}: X_{q}\neq\emptyset \}$. Put $X_{{\rm reg\,}}=X_p$. For simplicity, we assume that $p=1$. Since $X$ is connected, $X_{1}$ is open and dense in $X$. Assume that 
$X=\cup_{j=0}^{t-1}X_{p_{j}}$, $1=p_0<p_1<\cdots<p_{t-1}$ and put $X_{\rm sing}:=\cup_{j=1}^{t-1}X_{p_{j}}$. 

Let $\bar\partial_{b}: C^\infty(X)\To\Omega^{0,1}(X)$ be the tangential Cauchy-Riemann operator.
For each $m\in\mathbb{Z}$, put
\begin{equation}
H_{b,m}^{0}(X):=\{u\in C^{\infty}(X):Tu=imu, \bar\partial_{b}u=0  \}.
\end{equation}
It is well-known that $\dim H_{b,m}^{0}(X)<\infty$ (see~\cite{hl}). 
Let $f_1\in H^0_{b,m}(X),\ldots,f_{d_m}\in H^0_{b,m}(X)$ be an orthonormal basis for $H^0_{b,m}(X)$. The Szeg\H{o} kernel function associated to $H^0_{b,m}(X)$ is given by 
\[S_m(x):=\sum^{d_m}_{j=1}\abs{f_j(x)}^2.\]
When the $S^1$ action is globally free, it is well-known that $S_m(x)\approx m^n$ uniformly on $X$. When $X$ is locally free, we only have $S_m(x)\approx m^n$ locally uniformly on $X_{{\rm reg\,}}$ in general (see Theorem~\ref{t-gue170704}). Moreover, $S_m(x)$ can be zero at some point of $X_{\rm sing}$ even for $m$ large (see~\cite{hl} and~\cite{hhla}). Let 
\begin{equation}\label{e-gue170704rya}
\alpha=[p_1,\ldots,p_{t-1}],
\end{equation}
that is $\alpha$ is the least common multiple of $p_1,\ldots,p_{t-1}$. 
In  Theorem~\ref{t-gue170704r}, we will show that there exist positive integers $1=k_0<k_{1}<\cdot\cdot\cdot<k_{t-1}$ independent of $m$ such that 
\[cm^n\leq S_{\alpha m}(x)+S_{k_1\alpha m}(x)+\cdots+S_{k_{t-1}\alpha m}(x)\leq\frac{1}{c}m^n\ \ \mbox{on $X$}\]
for all $m\gg1$, where $0<c<1$ is a constant independent of $m$. 
For each $m\in\mathbb N$, put 
\begin{equation}\label{e-gue170794lr}
 A_m(X):=\bigcup^{t-1}_{j=0}H^0_{b,k_j\alpha m}(X).
 \end{equation}
We write $d\mu_m$ to denote the normalized Haar measure on the unit sphere, 
defined in the natural way by using a fixed orthonormal basis,
\[SA_m(X):=\set{g\in A_m(X);\, (\,g\,|\,g\,)=1}.\]
Let $a_m={\rm dim\,}A_m(X)$. We fix an orthonormal basis $\set{g^{(m)}_j}^{a_m}_{j=1}$ of $A_m(X)$ with respect to $(\,\cdot\,|\,\cdot\,)$, then we can identify the sphere $S^{2a_m-1}$ to $SA_m(X)$ by 
\[(z_1,\ldots,z_{a_m})\in S^{2a_m-1}\To\sum^k_{j=1}z_jg^{(m)}_j\in SA_m(X),\]
and we have 
\begin{equation}\label{e-gue170703a}
d\mu_m=\frac{dS^{2a_m-1}}{{\rm vol\,}(S^{2a_m-1})},
\end{equation}
where $dS^{2a_m-1}$ denotes the standard Haar measure on $S^{2a_m-1}$. We consider the probability space $\Omega(X):=\prod^\infty_{m=1}SA_m(X)$ with the probability measure $d\mu:=\prod^\infty_{m=1}d\mu_m$. We denote $u=\set{u_m}\in\Omega(X)$.  

Since the $S^1$ action is transversal and CR, 
$X\times \mathbb{R}$ is a complex manifold with the following holomorphic tangent bundle and complex structure $J$,
\begin{equation}\label{e-gue180419}
\begin{split}
&T^{1,0}X\oplus\{\mathbb{C}(T-i\frac{\partial}{\partial\eta}) \},\\
&JT=\frac{\partial}{\partial\eta}, \ Ju=iu \ \text{for}\ u\in T^{1,0}X.\\
\end{split}
\end{equation}
Let $v(z,\theta,\eta)$ be a non-trivial holomorphic function on $X\times\Real$. We write $[v=0]$ to denote the current of integration with multiplicities over the analytic hypersurface $\{v=0\}$ determined by the nontrivial holomorphic function $v$ on $X\times\Real$.  That is, for a smooth $2n$-form $g\in\Omega_{0}^{2n}(X\times\Real)$  with compact support in $X\times\Real$, we have 
\begin{equation}\label{e-reI}
\langle[v=0], g\rangle=\int_{\{v=0\}}g.
\end{equation}
Denote by $\tilde\partial$ (resp. $\bar{\tilde\partial}$) the $\partial$-operator (resp. $\ddbar$-operator)
with respect to the complex structure in \eqref{e-gue180419}. By the Lelong-Poincar\'{e} formula~\cite[III-2.15]{dj} and \cite[Theorem 2.3.3]{mm} (see Propositioin~\ref{p-gue1804}), we have
\begin{equation}\label{e-gue1804191}
\langle[v=0], g\rangle=\frac{i}{2\pi}\int\tilde\partial\bar{\tilde\partial}\log|v|^{2}\wedge g.
\end{equation}

For $u\in A_m(X)$, it is easy to see that  there exists a unique function $v(x,\eta)\in C^\infty(X\times\Real)$, which is holomorphic in $X\times \mathbb{R}$ such that $v\big|_{\eta=0}=u$ (see Lemma~\ref{l-gue170704w}). For all  $g\in\Omega^{p,q}(X)$, we extend $g$ trivially in the variable $\eta$ on $X\times\Real$. Then, for $f\in\Omega^{n,n}_0(X)$ and every $\chi(\eta)\in C^\infty_0(\Real)$, $f\wedge\omega_0\wedge\chi(\eta)d\eta$ is a smooth $2n$-from on $X\times\Real$ with compact support in $X\times\Real$. We then define $\langle[v=0], f\wedge\omega_0\wedge\chi(\eta)d\eta\rangle$ as in \eqref{e-reI}. 
The main result of the first part is the following 

\begin{theorem}\label{t-gue170704ryz}
With the above notations and assumptions,  fix $\chi(\eta)\in C^\infty_0(\Real)$ with $\int\chi(\eta)d\eta=1$ and let $\varepsilon_m$ be a sequence with $\lim_{m\To\infty}m\varepsilon_m=0$. Then for $d\mu$-almost every $u=\set{u_m}\in\Omega(X)$, we have 
\begin{equation}\label{e-reII}
\lim_{m\To\infty}\frac{1}{m}\langle\,[v_{m}=0], f\wedge\omega_0\wedge \frac{1}{\varepsilon_m}\chi(\frac{\eta}{\varepsilon_m})d\eta \,\rangle
=\alpha\frac{1+k_{1}^{n+1}+\cdot\cdot\cdot+k_{t-1}^{n+1}}{1+k_{1}^{n}+\cdot\cdot\cdot+k_{t-1}^{n}}\frac{i}{\pi}\int_{X}\mathcal{L}_{X}\wedge f\wedge\omega_{0},
\end{equation}
for all $f\in\Omega^{n-1,n-1}(X)$, where $v_m(x,\eta)$ is the unique holomorphic function on $X\times \mathbb{R}$ such that $v_m\big|_{\eta=0}=u_m(x)$, $\alpha=[p_1,\ldots,p_{t-1}]$, $f\wedge\omega_0\wedge \frac{1}{\varepsilon_m}\chi(\frac{\eta}{\varepsilon_m})d\eta$ is a smooth $(n,n)$ form on $X\times\Real$, the duality $\langle\,\cdot\,,\cdot\,\rangle$ in \eqref{e-reII} is given by \eqref{e-reI}, $\eta$ denotes the coordinate on $\Real$, $\omega_0$ is the Reeb one form on $X$ (see the discussion in the beginning of Section~\ref{s-gue170627}),
$\mathcal{L}_{X}$ denotes the Levi form of $X$ with respect to the Reeb one form $\omega_0$ (see Definition~\ref{d-1.2}). 
\end{theorem} 

\begin{remark}\label{r-gue180419}
We explain the role $\varepsilon_m$ in Theorem~\ref{t-gue170704ryz}.
For simplicity, assume that $t=2$ and $m_1:=\alpha m$, $m_2:=\alpha k_1m$. 
Let $u\in H^0_{b,m_1}(X)\oplus H^0_{b,m_2}(X)$. Let $(z,\theta,\varphi)$ be BRT coordinates on an open set $D$ of $X$ (see Theorem~\ref{t-gue170720y}).
On $D$, we write 
\begin{equation*}
u=u_{1}+u_{2}=\tilde u_{1}(z)e^{im_{1}\theta}+\tilde u_{2}(z)e^{im_{2}\theta}\in H^{0}_{b,m_{1}}(X)\oplus H^{0}_{b,m_{2}}(X).
\end{equation*}
Then the unique holomorphic function $v(z,\theta,\eta)\in C^\infty(X\times\Real)$ with $v\big|_{\eta=0}=u$ is given by 
\[v(z,\theta,\eta)=\tilde u_{1}(z)e^{im_{1}(\theta+i\eta)}+\tilde u_{2}(z)e^{im_{2}(\theta+i\eta)}.\]
Then, formally
\begin{equation}\label{e-gue180420}\begin{split}
&\langle\,[v(z,\theta,\eta)=0], f(z,\theta)\wedge\omega_0\wedge\frac{1}{\varepsilon_m}\chi(\frac{\eta}{\varepsilon_m})d\eta \,\rangle\\
&=\langle\,[\tilde u_{1}(z)e^{im_{1}(\theta+i\eta)}+\tilde u_{2}(z)e^{im_{2}(\theta+i\eta)}=0],  f(z,\theta)\wedge\omega_0\wedge\frac{1}{\varepsilon_m}\chi(\frac{\eta}
{\varepsilon_m})d\eta \,\rangle\\
&=\langle\,[\tilde u_{1}(z)e^{im_{1}(\theta+i\varepsilon_m\eta)}+\tilde u_{2}(z)e^{im_{2}(\theta+i\varepsilon_m\eta)}=0],  f(z,\theta)\wedge\omega_0\wedge\chi(\eta)d\eta \,\rangle.
\end{split}\end{equation}
From the last equation of \eqref{e-gue180420}, intuitively speaking, when $m\varepsilon_m\To0$, the integral 
\[\langle\,[v(z,\theta,\eta)=0], f(z,\theta)\wedge\omega_0\wedge\frac{1}{\varepsilon_m}\chi(\frac{\eta}{\varepsilon_m})d\eta \,\rangle\] 
will converge to the  integration of ``CR" current $\langle\,[u(z,\theta)=0], f(z,\theta)\wedge\omega_0\,\rangle$.
\end{remark}

\begin{remark}\label{r-gue180420}
Assume that the $S^1$-action is globally free. Let $u\in H^0_{b,m}(X)$. Let $(z,\theta,\varphi)$ be BRT coordinates on an open set $D$ of $X$ (see Theorem~\ref{t-gue170720y}).
On $D$, we write $u=\tilde u(z)e^{im\theta}$ and the unique holomorphic function $v(z,\theta,\eta)\in C^\infty(X\times\Real)$ with $v\big|_{\eta=0}=u$ is given by 
$v(z,\theta,\eta)=u(z)e^{im(\theta+i\eta)}$. Then $\{v=0\}=\{u=0\}\times\Real$ and for every $\varepsilon_m>0$, we have 
\[\begin{split}\langle\,[v(z,\theta,\eta)=0], f(z,\theta)\wedge\omega_0\wedge\frac{1}{\varepsilon_m}\chi(\frac{\eta}{\varepsilon_m})d\eta \,\rangle
=\langle\,[v(z,\theta,\eta)=0], f(z,\theta)\wedge\omega_0\wedge\chi(\eta)d\eta \,\rangle.\end{split}\]
For the globally free case, we don't need $\varepsilon_m$ in Theorem~\ref{t-gue170704ryz}.
\end{remark}
When the $S^1$-action is globally free, then $t=1$, $\alpha=1$, $A_{m}(X)=H^{0}_{b,m}(X)$
and $\Omega(X)=\prod_{m=1}^{\infty}SA_{m}(X)=\prod_{m=1}^{\infty}SH^{0}_{b,m}(X)$. 
From Remark~\ref{r-gue180420}, we deduce the following

\begin{corollary}\label{c-gue180418}
With the same notations and assumptions in Theorem \ref{t-gue170704ryz}, if the $S^1$-action is globally free, then for $d\mu$-almost every $u=\set{u_m}\in\Omega(X)$ we have 
\begin{equation}\label{e-reII1}
\lim_{m\To\infty}\frac{1}{m}\langle\,[v_{m}=0], f\wedge\omega_0\wedge\chi(\eta)d\eta \,\rangle
=\frac{i}{\pi}\int_{X}\mathcal{L}_{X}\wedge f\wedge\omega_{0},
\end{equation}
for all $f\in\Omega^{n-1,n-1}(X)$, where $v_m$ is the unique holomorphic function on $X\times \mathbb{R}$ such that $v_m\big|_{\eta=0}=u_m$. 
\end{corollary}

Let $Y$ be a compact K\"ahler manifold with 
$\dim_{\Complex}Y=n$ and let $L$ be a line bundle over $Y$ with a smooth Hermitian metric $h$ such that the induced curvature $R^L$ is positive on $Y$. 
Let $e_{L}$ be a local frame of $L$. We write $|e_{L}(y)|_{h}=e^{-\phi}$.  Then $R^{L}=2\partial\ddbar\phi$. Take $\omega:=\frac{i}{2\pi}R^{L}$ to be the K\"ahler form of $Y$. Denote by $H^{0}(Y,L^{m})$ the space of all holomorphic sections of $L^m$.
Set $\Omega(Y,L):=\prod_{m=1}^{\infty}SH^{0}(Y,L^{m})$ with the probability measure
$d\mu:=\prod^\infty_{m=1}d\mu_m$ (cf. \eqref{e-gue170703a})
As an application of Theorem \ref{t-gue170704ryz}, we obtain the classical equidistribution theorem on line bundles (see e.g. \cite[Theorem 1.1]{sz} and \cite[Theorem 5.3.3]{mm}).

\begin{corollary}\label{c-gue1804181}
With the above notations and assumptions, for $d\mu$-almost every $s=\{s_{m}\}\in\Omega(Y,L)$,
we have
\begin{equation}\label{e-reII2}
\lim_{m\To\infty}\frac{1}{m}[s_{m}=0]=\omega
\end{equation}
in the sense of currents.
\end{corollary}

Now we formulate the main result of the second part. 
Let $M$ be a relatively compact open subset with $C^\infty$ boundary $X$ of a complex manifold $M'$ of dimension $n+1$ with a smooth Hermitian metric $\langle\,\cdot\,|\,\cdot\,\rangle$ on its holomorphic tangent bundle $T^{1,0}M'$. The Hermitian metric on holomorphic tangent bundle induces a Hermitian metric $\langle\,\cdot\,|\,\cdot\,\rangle$ on $\oplus^{2n+2}_{k=1}\Lambda^k(\Complex T^*M')$. 
Let $r\in C^\infty(M',\Real)$ be a defining function of $X$, that is, $X=\set{z\in M';\, r(z)=0}$, $M=\set{z\in M';\, r(z)<0}$. We take $r$ so that $\norm{dr}^2=\langle\,dr\,|\,dr\,\rangle=1$ on $X$. In this work, we assume that $X$ is strongly pseudoconvex, that is, $\pr\ddbar r|_{T^{1,0}X}$ is positive definite at each point of $X$, where $T^{1,0}X:=T^{1,0}M'\bigcap\Complex TX$ is the standard CR structure on $X$. Let $dv_M$ be the volume form on $M$ induced by $\langle\,\cdot\,|\,\cdot\,\rangle$ and let $(\,\cdot\,|\,\cdot\,)_M$ be the $L^2$ inner product on $C^\infty_0(M)$ induced by $dv_M$ and let $L^2(M)$ be the completion of $C^\infty_0(M)$ with respect to $(\,\cdot\,|\,\cdot\,)_M$. Let $H_{(2)}^0(M)=\set{u\in L^2(M);\, \ddbar u=0}$. By using classical result of Boutet de Monvel-Sj\"ostrand~\cite[Theorem 1.5]{BouSj76}, we see that $C^\infty(\ol M)\bigcap H_{(2)}^0(M)$ is dense in $H_{(2)}^0(M)$ in the $L^2(M)$ space and we can find $g_j\in C^\infty(\ol M)\bigcap H_{(2)}^0(M)$ with $(\, g_j\,|\,g_k\,)_M=\delta_{j,k}$, $j, k=1,2,\ldots$, such that the set
\begin{equation}\label{e-gue170709}
A(M):=\rm{span\,}\set{g_1, g_2,\ldots}
\end{equation}
is dense in $H_{(2)}^0(M)$. That is, for every $h\in H_{(2)}^0(M)$, we can find $h_{\ell}\in A(M)$, $\ell=1,2,\ldots$, such that $\lim_{\ell\To\infty}h_{\ell}=h$ in $L^2(M)$ space. 

To state our equidistribution theorem, we need to introduce some notations. 
For every $m\in\mathbb N$, let $A_m(M)={\rm span\,}\set{g_1,\ldots,g_m}$, where $g_j\in H_{(2)}^0(M)\bigcap C^\infty(\ol M)$, $j=1,\ldots,m$, are as \eqref{e-gue170709}. Let $d\mu_m$ be the equidistribution probability measure on the unit sphere 
\[SA_m(M):=\set{g\in A_m(M);\, (\,g\,|\,g\,)_M=1}.\]
Let $\beta:=\set{b_j}^\infty_{j=1}$ with $b_1<b_2<\cdots$ and $b_j\in\mathbb N$, for every $j=1,2,\ldots$. 
We consider the probability space 
\begin{equation}\label{e-gue170709I}
\Omega(M,\beta):=\prod^\infty_{j=1}SA_{b_j}(M)
\end{equation} 
with the probability measure 
\begin{equation}\label{e-gue170709II}
d\mu(\beta):=\prod^\infty_{j=1}d\mu_{b_j}.
\end{equation}
We denote $u=\set{u_k}\in\Omega(M,\beta)$. For $g\in H_{(2)}^0(M)\bigcap C^\infty(\ol M)$, we let $[g=0]$ denote the zero current in $M$.

Let $B^{*0,1}M'=\set{u\in T^{*0,1}M';\, \langle\,u\,|\,\ddbar r\,\rangle=0}$, where $T^{*0,1}M'$ denotes the bundle of $(0,1)$ forms on $M'$. Let $B^{*1,0}M':=\ol{B^{*0,1}M'}$ and let $B^{*p,q}M':=\Lambda^p(B^{*1,0}M')\wedge\Lambda^q(B^{*0,1}M')$, $p,q=1,\ldots,n$. Let $\omega_0=J(dr)$, where $J$ is the standard complex structure map on $T^*M'$ and let $\mathcal{L}_X\in C^\infty(X,T^{*1,1}X)$ be the Levi form induced by $\omega_0$ (see Definition~\ref{d-1.2}). Our second main result is the following

\begin{theorem}\label{t-gue170703c}
With the notations and assumptions above, fix $\psi\in C^\infty_0([-1,-\frac{1}{2}])$. There exists a sequence $\beta=\set{b_j}^\infty_{j=1}$ independent of $\psi$ with $b_1<b_2<\cdots$, $b_j\in\mathbb N$, $j=1,2,\ldots$, such that for $d\mu(\beta)$-almost every $u=\set{u_k}\in\Omega(M,\beta)$, we have 
\begin{equation}\label{e-gue170703c}
\lim_{k\To\infty}\langle\,[u_k=0], (2i)kr\psi(kr)\phi\wedge\pr r\wedge\ddbar r\,\rangle=-(n+2)\frac{i}{2\pi}c_0\int_X\mathcal{L}_X\wedge\omega_0\wedge\phi
\end{equation}
for all $\phi\in C^\infty(\ol M,B^{*n-1,n-1}M')$, where $c_0=\int_\Real\psi(x)dx$, $\Omega(M,\beta)$ and $d\mu(\beta)$ are as in \eqref{e-gue170709I} and \eqref{e-gue170709II} respectively. 
\end{theorem}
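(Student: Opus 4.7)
The overall strategy adapts the Shiffman--Zelditch--Dinh--Sibony framework for random zero currents to the boundary--value setting. The novelty is that the scaling parameter $k$ does not arise from high powers of a line bundle but from the localization $\psi(kr)$ that concentrates the test form in the thin shell $\set{-1/k\leq r\leq -1/(2k)}$, so the relevant Bergman kernel asymptotic is the Fefferman--Boutet de Monvel--Sj\"ostrand boundary blow--up rather than any Tian--Bouche--Zelditch expansion. Set $\Phi_k:=(2i)kr\psi(kr)\phi\wedge\pr r\wedge\ddbar r$, which is compactly supported in $M$; the Poincar\'e--Lelong formula together with two integrations by parts gives
\begin{equation*}
\langle\,[u_k=0],\Phi_k\,\rangle=\tfrac{i}{2\pi}\int_M\log|u_k|^2\,\pr\ddbar\Phi_k.
\end{equation*}
Let $B_m(z):=\sum_{j=1}^{m}|g_j(z)|^2$ be the Bergman kernel function of $A_m(M)$ and split $\log|u_k|^2=\log B_{b_k}(z)+U_k(z)$ with $U_k:=\log(|u_k|^2/B_{b_k})$. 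Since $\int_M\pr\ddbar\Phi_k=0$, additive constants in the integrand drop out.

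For the deterministic piece I would choose $b_k\nearrow\infty$ fast enough that $B_{b_k}(z)=(1+o(1))B(z,z)$ uniformly on $\mathrm{supp}\,\Phi_k$. By Boutet de Monvel--Sj\"ostrand one has $\log B(z,z)=-(n+2)\log(-r(z))+O(1)$ near $X$ with a smooth $O(1)$ remainder whose contribution, after one more integration by parts, is $O(1/k)$ because $\Phi_k$ has total mass $O(1/k)$. The singular contribution is computed by pushing $\pr\ddbar$ back onto $-(n+2)\log(-r)$; the summand $\pr r\wedge\ddbar r/r^2$ of $\pr\ddbar\log(-r)$ is annihilated by the $\pr r\wedge\ddbar r$ already sitting inside $\Phi_k$, so only the $\pr\ddbar r/(-r)$ summand survives and one obtains $(n+2)(2i)k\psi(kr)\,\pr\ddbar r\wedge\phi\wedge\pr r\wedge\ddbar r$. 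Slicing in the normal direction by $s=kr$ turns $k\psi(kr)\,dr$ into $\psi(s)\,ds$ and produces the factor $c_0$; on the boundary $\pr\ddbar r\wedge\pr r\wedge\ddbar r$ reduces to $\mathcal{L}_X\wedge\omega_0$ up to the standard normalization, which delivers the right--hand side of \eqref{e-gue170703c}.

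For the random piece $\mathcal{E}_k:=\int_M U_k\,\pr\ddbar\Phi_k$, rotational invariance of $d\mu_{b_k}$ makes the law of $|u_k(z)|^2/B_{b_k}(z)$ equal, at each fixed $z$, to that of $|c_1|^2$ for $c$ uniform on $S^{2b_k-1}$, whence $\|U_k(z)-\mathbb E[U_k(z)]\|_{L^p(d\mu_{b_k})}\leq C_p$ uniformly in $z$ and $k$. The off--diagonal decay of the normalized Bergman kernel $|B(z,w)|/\sqrt{B(z,z)B(w,w)}$ supplied by the parametrix decorrelates $U_k(z_1)$ and $U_k(z_2)$ at tangential distance $\gg 1/\sqrt{k}$; combined with the total--variation bound $\|\pr\ddbar\Phi_k\|=O(1)$, a Fubini--Chebyshev estimate produces $\mathbb E[\mathcal{E}_k^2]=o(1)$, and Borel--Cantelli along a sufficiently fast--growing sequence $b_k$ yields $\mathcal{E}_k\to0$ almost surely. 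The main obstacle is aligning the two constraints on the growth of $b_k$: it must be fast enough that $B_{b_k}\sim B\sim k^{n+2}$ uniformly on the shrinking shell $\set{-1/k\leq r\leq -1/(2k)}$, where the pointwise convergence $B_m\to B$ is not automatically uniform, yet a single sequence $\beta$ must work for every admissible $\psi$ and $\phi$ simultaneously. Producing the quantitative lemma that prescribes how fast $b_k$ must grow in order for the finite--dimensional truncations to reproduce the boundary Fefferman blow--up is the technical crux of the argument.
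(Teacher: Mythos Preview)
Your plan follows the same route as the paper: Poincar\'e--Lelong, the split $\log|u_k|^2=\log P_{b_k}+U_k$, a Shiffman--Zelditch variance argument for $U_k$, and Boutet de Monvel--Sj\"ostrand for the deterministic piece, with the ``technical crux'' being exactly the lemma the paper proves as its Theorem~5.6 (the construction of $b_k$). Two points where the paper proceeds more economically are worth noting.

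First, for the deterministic piece the paper does not ask for $B_{b_k}=(1+o(1))B(\cdot,\cdot)$ on the shell; it only needs the two--sided bound $c\le (-r)^{n+2}P_{b_k}\le C$ there. The upper bound is free from $P_{b_k}\le B(z,z)$, and the lower bound is obtained by a finite--cover compactness argument on each shell $\{1/(2k)\le -r\le 1/k\}$ using the pointwise identity $\sum_j|g_j(z)|^2=B(z,z)$. Boundedness of $\log\bigl((-r)^{n+2}P_{b_k}\bigr)$ already makes that contribution $O(1/k)$, so the $(1+o(1))$ you aim for is stronger than necessary (and your ``smooth $O(1)$ remainder'' is in fact only bounded, because of the Fefferman $\log(-r)$ term, but bounded is all that is used).

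Second, and more substantively, the paper does \emph{not} appeal to off--diagonal decay of the Bergman kernel for the variance. It integrates by parts so that $\pr\ddbar$ lands on $\phi$, producing the test form $f_k=-\tfrac{1}{\pi}\,kr\psi(kr)\,\pr\ddbar\phi\wedge\pr r\wedge\ddbar r$ with $\|f_k\|_{L^1}=O(1/k)$ (the would--be $O(k)$ terms coming from differentiating $kr\psi(kr)$ carry a factor $\pr r$ or $\ddbar r$ and are annihilated by the $\pr r\wedge\ddbar r$ already present). One then invokes only the \emph{uniform} probabilistic bound $|Q_k(x,y)-C_k|\le C$ from \cite[Lemma~5.3.2]{mm}; the constant $C_k$ drops out by exactness, and $R_k\le C\|f_k\|_{L^1}^2=O(1/k^2)$ is summable. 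Your route through $\|\pr\ddbar\Phi_k\|=O(1)$ plus decorrelation at scale $1/\sqrt{k}$ can in principle be pushed through, but it yields only $\mathbb E[\mathcal E_k^2]=o(1)$ and would force you to thin the sequence again to get Borel--Cantelli; the paper's bookkeeping avoids this and decouples the choice of $b_k$ (needed only for the Bergman blow--up) from the summability of the variance.
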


\begin{remark}\label{r-gue180417}
By the result of Boutet de Monvel-Sj\"ostrand~\cite[Theorem 1.5]{BouSj76}, we have 
\[\mbox{$\sum^\infty_{j=1}\abs{g_j(x)}^2\sim\abs{r^{-(n+2)}(x)}$ in $\ol M$}.\]
The numbers $b_{j}$	are chosen so that the function  $\sum^{b_j}_{s=1}\abs{g_s(x)}^2\sim\abs{r^{-(n+2)}(x)}$ on $\{x\in M:-\frac{1}{k}\leq r\leq-\frac{1}{2k}\}$ (see Theorem \ref{t-gue170717} ). In general, 
$\sum^{j}_{s=1}\abs{g_s(x)}^2$ could not be asymptotically $\abs{r^{-(n+2)}(x)}$ and we can't not take $b_j$ to be $j$. It is an interesting question to determine the subsequence $b_j$. 
\end{remark}

\begin{remark}\label{r-gue180420a}
Note that for any smooth $(n,n)$ form on $M$, we can write $\phi\wedge\pr r\wedge\ddbar r$ near the boundary $X$,  where $\phi\in C^\infty(\ol M,B^{*n-1,n-1}M')$. 
From the proof of Theorem~\ref{t-gue170703c}, we actually prove that for $d\mu(\beta)$-almost every $u=\set{u_k}\in\Omega(M,\beta)$, we have 
\begin{equation}\label{e-gue180420q}
\lim_{k\To\infty}\Bigr(\frac{1}{k}\langle\,[u_k=0], g_k\,\rangle+i\frac{n+2}{2\pi}\frac{1}{k}\int_{M}\pr\ddbar\log(-r)\wedge g_k\Bigr)=0,
\end{equation}
for all $k$-uniformly test form $g_k\in\Omega^{n,n}_0(M)$. Here $k$-uniformly test form $g_k\in\Omega^{n,n}_0(M)$ means that for any smooth $(1,1)$ form $\psi$, the integral 
$\int g_k\wedge\psi$ is uniformly bounded in $k$. For example, $k^2r\psi(kr)\phi\wedge\pr r\wedge\ddbar r$ is a $k$-uniformly test form, where $\psi\in C^\infty_0([-1,-\frac{1}{2}])$ and $\phi\in C^\infty(\ol M,B^{*n-1,n-1}M')$. In Theorem~\ref{t-gue170703c}, we take special test form $r\psi(kr)\phi\wedge\pr r\wedge\ddbar r$ since 
Theorem~\ref{t-gue170703c} aims to show the asymptotic behavior of the currents $\{[u_{k}=0]\}$ when the supports of test forms tend to approach the boundary $X$.
\end{remark}

The paper is organized as follows. In Section~\ref{s:prelim} we collect some notations we use throughout and we recall the basic knowledge about CR manifolds. In Section 3 we recall a theorem about Szeg\H{o} kernel asymptotics and give a uniform estimate of Szeg\H{o} kernel functions. 
Section 4 is devoted to proving Theorem~\ref{t-gue170704ryz}. In Section~\ref{s-gue170709}, we first construct holomorphic functions with specific rate near the boundary and we prove Theorem~\ref{t-gue170703c}.  

\section{Preliminaries}\label{s:prelim}
\subsection{Standard notations} \label{s-ssna}
We shall use the following notations: $\mathbb N=\set{1,2,\ldots}$, $\mathbb N_0=\mathbb N\cup\set{0}$, $\Real$ 
is the set of real numbers, $\ol\Real_+:=\set{x\in\Real;\, x\geq0}$. 
For a multi-index $\alpha=(\alpha_1,\ldots,\alpha_n)\in\mathbb N_0^n$,
we denote by $\abs{\alpha}=\alpha_1+\ldots+\alpha_n$ its norm and by $l(\alpha)=n$ its length.
For $m\in\mathbb N$, write $\alpha\in\set{1,\ldots,m}^n$ if $\alpha_j\in\set{1,\ldots,m}$, 
$j=1,\ldots,n$. $\alpha$ is strictly increasing if $\alpha_1<\alpha_2<\ldots<\alpha_n$. For $x=(x_1,\ldots,x_n)$, we write
\[
\begin{split}
&x^\alpha=x_1^{\alpha_1}\ldots x^{\alpha_n}_n,\\
& \pr_{x_j}=\frac{\pr}{\pr x_j}\,,\quad
\pr^\alpha_x=\pr^{\alpha_1}_{x_1}\ldots\pr^{\alpha_n}_{x_n}=\frac{\pr^{\abs{\alpha}}}{\pr x^\alpha}\,,\\
&D_{x_j}=\frac{1}{i}\pr_{x_j}\,,\quad D^\alpha_x=D^{\alpha_1}_{x_1}\ldots D^{\alpha_n}_{x_n}\,,
\quad D_x=\frac{1}{i}\pr_x\,.
\end{split}
\]
Let $z=(z_1,\ldots,z_n)$, $z_j=x_{2j-1}+ix_{2j}$, $j=1,\ldots,n$, be coordinates of $\Complex^n$.
We write
\[
\begin{split}
&z^\alpha=z_1^{\alpha_1}\ldots z^{\alpha_n}_n\,,\quad\ol z^\alpha=\ol z_1^{\alpha_1}\ldots\ol z^{\alpha_n}_n\,,\\
&\pr_{z_j}=\frac{\pr}{\pr z_j}=
\frac{1}{2}\Big(\frac{\pr}{\pr x_{2j-1}}-i\frac{\pr}{\pr x_{2j}}\Big)\,,\quad\pr_{\ol z_j}=
\frac{\pr}{\pr\ol z_j}=\frac{1}{2}\Big(\frac{\pr}{\pr x_{2j-1}}+i\frac{\pr}{\pr x_{2j}}\Big),\\
&\pr^\alpha_z=\pr^{\alpha_1}_{z_1}\ldots\pr^{\alpha_n}_{z_n}=\frac{\pr^{\abs{\alpha}}}{\pr z^\alpha}\,,\quad
\pr^\alpha_{\ol z}=\pr^{\alpha_1}_{\ol z_1}\ldots\pr^{\alpha_n}_{\ol z_n}=
\frac{\pr^{\abs{\alpha}}}{\pr\ol z^\alpha}\,.
\end{split}
\]
For $j, s\in\mathbb Z$, set $\delta_{j,s}=1$ if $j=s$, $\delta_{j,s}=0$ if $j\neq s$.

Let $W$ be a $C^\infty$ paracompact manifold.
We let $TW$ and $T^*W$ denote the tangent bundle of $W$
and the cotangent bundle of $W$, respectively.
The complexified tangent bundle of $W$ and the complexified cotangent bundle of $W$ will be denoted by $\Complex TW$
and $\Complex T^*W$, respectively. Write $\langle\,\cdot\,,\cdot\,\rangle$ to denote the pointwise
duality between $TW$ and $T^*W$.
We extend $\langle\,\cdot\,,\cdot\,\rangle$ bilinearly to $\Complex TW\times\Complex T^*W$.
Let $G$ be a $C^\infty$ vector bundle over $W$. The fiber of $G$ at $x\in W$ will be denoted by $G_x$.
Let $E$ be a vector bundle over a $C^\infty$ paracompact manifold $W_1$. We write
$G\boxtimes E^*$ to denote the vector bundle over $W\times W_1$ with fiber over $(x, y)\in W\times W_1$
consisting of the linear maps from $E_y$ to $G_x$.  Let $Y\subset W$ be an open set. 
From now on, the spaces of distribution sections of $G$ over $Y$ and
smooth sections of $G$ over $Y$ will be denoted by $D'(Y, G)$ and $C^\infty(Y, G)$, respectively.
Let $E'(Y, G)$ be the subspace of $D'(Y, G)$ whose elements have compact support in $Y$. Put $C^\infty_0(Y,G):=C^\infty(Y,G)\bigcap E'(Y,G)$. 

Let $G$ and $E$ be $C^\infty$ vector
bundles over paracompact orientable $C^\infty$ manifolds $W$ and $W_1$, respectively, equipped with smooth densities of integration. If
$A: C^\infty_0(W_1,E)\To D'(W,G)$
is continuous, we write $K_A(x, y)$ or $A(x, y)$ to denote the distribution kernel of $A$.

Let $H(x,y)\in D'(W\times W_1,G\boxtimes E^*)$. We write $H$ to denote the unique 
continuous operator $C^\infty_0(W_1,E)\To D'(W,G)$ with distribution kernel $H(x,y)$. 
In this work, we identify $H$ with $H(x,y)$. 

Let $M$ be a relatively compact open subset with $C^\infty$ boundary $X$ of a complex manifold $M'$. Let $F$ be a $C^\infty$ vector bundle over $M'$. Let $C^\infty(\ol M, F)$, $D'(\ol M,F)$ denote the spaces of restrictions to $M$ of elements in the spaces $C^\infty(M',F)$, $D'(M',F)$ respectively. 

\subsection{CR manifolds}\label{s-gue170627}

Let $(X, T^{1,0}X)$ be a compact, orientable CR manifold of dimension $2n+1$, $n\geq 1$, where $T^{1,0}X$ is a CR structure of $X$, that is, $T^{1,0}X$ is a subbundle of rank $n$ of the complexified tangent bundle $\mathbb{C}TX$, satisfying $T^{1,0}X\cap T^{0,1}X=\{0\}$, where $T^{0,1}X=\overline{T^{1,0}X}$, and $[\mathcal V,\mathcal V]\subset\mathcal V$, where $\mathcal V=C^\infty(X, T^{1,0}X)$. 
We  fix a real non-vanishing $1$ form $\omega_0\in C(X,T^*X)$ so that $\langle\,\omega_0(x)\,,\,u\,\rangle=0$, for every $u\in T^{1,0}_xX\oplus T^{0,1}_xX$, for every $x\in X$.
We call $\omega_0$ Reeb one form on $X$. 

\begin{definition}\label{d-1.2}
For $p\in X$, the Levi form $\mathcal L_{X,p}$ of $X$ at $p$ is the Hermitian quadratic form on $T^{1,0}_pX$ given by
$\mathcal{L}_{X,p}(U,V)=-\frac{1}{2i}\langle\,d\omega_0(p)\,,\,U\wedge\ol V\,\rangle$, $U, V\in T^{1,0}_pX$.
	
	Denote by $\mathcal{L}_{X}$ the Levi form on $X$.
\end{definition} 

Fix a global non-vanishing vector field $T\in C^\infty(X,TX)$ such that $\omega_0(T)=-1$ and $T$ is transversal to $T^{1,0}X\oplus T^{0,1}X$. We call $T$ Reeb vector field on $X$. 
Take a smooth Hermitian metric $\langle \cdot \mid \cdot \rangle$ on $\mathbb{C}TX$ so that $T^{1,0}X$ is orthogonal to $T^{0,1}X$, $\langle u \mid v \rangle$ is real if $u, v$ are real tangent vectors, $\langle\,T\,|\,T\,\rangle=1$ and $T$ is orthogonal to $T^{1,0}X\oplus T^{0,1}X$. For $u \in \mathbb{C}TX$, we write $|u|^2 := \langle u | u \rangle$. Denote by $T^{*1,0}X$ and $T^{*0,1}X$ the dual bundles $T^{1,0}X$ and $T^{0,1}X$, respectively. They can be identified with subbundles of the complexified cotangent bundle $\mathbb{C}T^*X$. Define the vector bundle of $(p,q)$-forms by $T^{*p,q}X := (\wedge^pT^{*1,0}X)\wedge(\wedge^qT^{*0,1}X)$. The Hermitian metric $\langle \cdot | \cdot \rangle$ on $\mathbb{C}TX$ induces, by duality, a Hermitian metric on $\mathbb{C}T^*X$ and also on the bundles of $(p,q)$ forms $T^{*p,q}X, p, q=0, 1, \cdots, n$. We shall also denote all these induced metrics by $\langle \cdot | \cdot \rangle$. Note that we have the pointwise orthogonal decompositions:
\begin{equation}
\begin{array}{c}
\mathbb{C}T^*X = T^{*1,0}X \oplus T^{*0,1}X \oplus \left\{ \lambda \omega_0: \lambda \in \mathbb{C} \right\}, \\
\mathbb{C}TX = T^{1,0}X \oplus T^{0,1}X \oplus \left\{ \lambda T: \lambda \in \mathbb{C} \right\}.
\end{array}
\end{equation} 

Let $D$ be an open set of $X$. Let $\Omega^{p,q}(D)$ denote the space of smooth sections of $T^{*p,q}X$ over $D$ and let $\Omega^{p,q}_0(D)$ be the subspace of $\Omega^{p,q}(D)$ whose elements have compact support in $D$. For each point $x\in X$, in this paper, we will identify $\mathcal L_{X,x}$ as a $(1,1)$ form at $x$. Hence, 
$\mathcal L_X\in\Omega^{1,1}(X)$.  


Now, we assume that $X$ admits an $S^1$-action: $S^1\times X\rightarrow X, (e^{i\theta}, x)\rightarrow e^{i\theta}\circ x$. Here we use $e^{i\theta}$ to denote the $S^1$-action. Let $\Td T\in C^\infty(X, TX)$ be the global real vector field induced by the $S^1$-action given as follows
\begin{equation}
(\Td Tu)(x)=\frac{\partial}{\partial\theta}\left(u(e^{i\theta}\circ x)\right)\Big|_{\theta=0},~u\in C^\infty(X).
\end{equation}
\begin{definition}
	We say that the $S^1$-action $e^{i\theta} ~(0\leq\theta<2\pi$) is CR if
	$$[\Td T, C^\infty(X, T^{1,0}X)]\subset C^\infty(X, T^{1,0}X),$$
	where $[~,~ ]$ is the Lie bracket between the smooth vector fields on $X$.
	Furthermore, the $S^1$-action is called transversal if for each $x\in X$ one has
	$$\mathbb C\Td T(x)\oplus T_x^{1,0}(X)\oplus T_x^{0,1}X=\mathbb CT_xX.$$
\end{definition}

If the $S^1$ action is transversal and CR, we will always  take the Reeb one form on $X$ to be the global real one form determined by 
$\langle\,\omega_0\,,\, u\,\rangle=0$, for every $u\in T^{1,0}X\oplus T^{0,1}X$ and $\langle\,\omega_0\,,\,\Td T\,\rangle=-1$ and we will  always  take the Reeb vector field on $X$ to be $\Td T$. Hence, we will also write $T$ to denote the global real vector field induced by the $S^1$-action. 

Until further notice, we assume that $(X, T^{1,0}X)$ is a compact connected strongly pseudoconvex CR manifold with a transversal CR $S^1$-action $e^{i\theta}$.  For every $q\in\mathbb N$, put
\begin{equation}
X_q:=\{x\in X: e^{i\theta}\circ x\neq x, \forall\theta\in(0,\frac{2\pi}{q}),\ e^{i\frac{2\pi}{q}}\circ x=x\}.
\end{equation}
Set $p:=\min\{q\in\mathbb{N}: X_{q}\neq\emptyset \}$.
Thus, $X_{{\rm reg\,}}=X_p$. Note that one can re-normalize the $S^1$-action by lifting such that the new $S^1$-action satisfies  $X_1\neq\emptyset$, see \cite{dh}. For simplicity, we assume that $p=1$. If $X$ is connected, then $X_{1}$ is open and dense in $X$. Assume that \begin{equation*}
X=\cup_{j=0}^{t-1}X_{p_{j}},\ \ 1=:p_0<p_1<\cdots<p_{t-1}. 
\end{equation*}
Put $X_{sing}:=X_{sing}^{1}=\cup_{j=1}^{t-1}X_{p_{j}}$, and $X_{sing}^{r}:=\cup_{j=r}^{t-1}X_{p_{j}}$ for $2\leq r\leq t-1$.
Take the convention that $X_{sing}^{t}=\emptyset$.
It follows from \cite{dh} that

\begin{proposition}
$X_{sing}^{r}$ is a closed subset of $X$, for $1\leq r\leq t$.
\end{proposition}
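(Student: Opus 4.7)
The plan is to show that $X_{sing}^{r}=\bigcup_{j=r}^{t-1}X_{p_j}$ is sequentially closed in $X$. I would fix a sequence $x_n\in X_{sing}^{r}$ with $x_n\to x\in X$ and produce an index $\ell\geq r$ with $x\in X_{p_\ell}$; the boundary case $r=t$ is vacuous by the convention $X_{sing}^{t}=\emptyset$.

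For each $n$ there is some index $j_n\in\{r,r+1,\ldots,t-1\}$ with $x_n\in X_{p_{j_n}}$. Since this set of possible indices is finite, I pass to a subsequence along which $j_n=j$ is constant, with $j\geq r$. The defining relation for the stratum then gives $e^{i 2\pi/p_j}\circ x_n=x_n$ for every $n$, and continuity of the $S^1$-action on $X$ yields, in the limit, $e^{i 2\pi/p_j}\circ x=x$. Hence the stabilizer of $x$ in $S^1$ contains the cyclic subgroup of order $p_j$ generated by $e^{i 2\pi/p_j}$.

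On the other hand, $x\in X=\bigcup_{\ell=0}^{t-1}X_{p_\ell}$, so $x\in X_{p_\ell}$ for some $\ell$, and by definition of $X_{p_\ell}$ the stabilizer of $x$ is precisely the cyclic group of order $p_\ell$. Since a cyclic group of order $p_\ell$ that contains a subgroup of order $p_j$ must satisfy $p_j\mid p_\ell$, we obtain $p_\ell\geq p_j\geq p_r$. The strict inequalities $p_0<p_1<\cdots<p_{t-1}$ then force $\ell\geq r$, so $x\in X_{sing}^{r}$ as required.

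The only real subtlety I anticipate is the subsequence reduction: the continuity argument needs a single fixed rotation $e^{i 2\pi/p_j}$ fixing $x_n$ for all $n$, and this is possible precisely because the stratification $X=\bigcup_{j=0}^{t-1}X_{p_j}$ involves only finitely many strata. Beyond that, the proof rests on continuity of the group action together with the elementary divisibility of orders of subgroups of a finite cyclic group, neither of which should present any difficulty.
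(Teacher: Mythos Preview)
Your argument is correct. The sequential-closedness approach is sound: the reduction to a constant stratum via the pigeonhole principle on the finitely many indices is exactly what is needed to pass to the limit, and your identification of the stabilizer of a point in $X_{p_\ell}$ as the cyclic group of order $p_\ell$ (which follows directly from the definition of $X_q$ together with the classification of closed subgroups of $S^1$) makes the divisibility step via Lagrange's theorem go through cleanly.

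As for comparison with the paper: the paper does not supply its own proof of this proposition but simply records it as a consequence of Duistermaat--Heckman~\cite{dh}, where the orbit-type stratification of a compact Lie group action is developed in general. Your argument is more elementary and entirely self-contained, exploiting only the continuity of the action and the finiteness of the stratification; it avoids any appeal to the general theory. What the citation buys is brevity and a pointer to the broader structural result (the full orbit-type decomposition and its properties), whereas your direct proof keeps the paper independent of that machinery for this particular fact.
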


Fix $\theta_0\in [0, 2\pi)$. Let
$$d e^{i\theta_0}: \mathbb CT_x X\rightarrow \mathbb CT_{e^{i\theta_0}x}X$$
denote the differential map of $e^{i\theta_0}: X\rightarrow X$. By the properties of transversal CR $S^1$-actions, we can check that
\begin{equation}\label{a}
\begin{split}
de^{i\theta_0}:T_x^{1,0}X\rightarrow T^{1,0}_{e^{i\theta_0}x}X,\\
de^{i\theta_0}:T_x^{0,1}X\rightarrow T^{0,1}_{e^{i\theta_0}x}X,\\
de^{i\theta_0}(T(x))=T(e^{i\theta_0}x).
\end{split}
\end{equation}
Let $(e^{i\theta_0})^\ast: \Lambda^q(\mathbb CT^\ast X)\rightarrow\Lambda^q(\mathbb CT^\ast X)$ be the pull back of $e^{i\theta_0}, q=0,1\cdots, 2n+1$. From \eqref{a}, we can check that for every $q=0, 1,\cdots, n$
\begin{equation*}
(e^{i\theta_0})^\ast:  T^{\ast0,q}_{e^{i\theta_0}x}X\rightarrow T_x^{\ast0,q}X.
\end{equation*}

Let $u\in\Omega^{0,q}(X)$. The Lie derivative of $u$ along the direction $T$ is denoted by $Tu$.
We have $Tu\in\Omega^{0, q}(X)$ for all $u\in\Omega^{0, q}(X)$.

Let $\overline\partial_b:\Omega^{0,q}(X)\rightarrow\Omega^{0,q+1}(X)$ be the tangential Cauchy-Riemann operator. From \eqref{a}, it is straightforward to check that
\begin{equation}\label{c}
T\overline\partial_b=\overline\partial_bT~\text{on}
~\Omega^{0,q}(X).
\end{equation}

For every $m\in\mathbb Z$, put $\Omega^{0,q}_m(X):=\{u\in\Omega^{0,q}(X): Tu=imu\}$. For $q=0$, we write $C^\infty_m(X):=\Omega^{0,0}_m(X)$. 
We denote by $\overline\partial_{b, m}$ the restriction of $\overline\partial_b$ to $\Omega^{0, q}_m(X)$.
From (\ref{c}) we have the $\ddbar_{b, m}$-complex for every $m\in\mathbb Z$:
\begin{equation*}\label{e-gue140903VI}
\ddbar_{b, m}:\cdots\To\Omega^{0,q-1}_m(X)\To\Omega^{0,q}_m(X)\To\Omega^{0,q+1}_m(X)\To\cdots.
\end{equation*}
For $m\in\mathbb Z$, the $q$-th $\ddbar_{b, m}$-cohomology is given by
\begin{equation}\label{a8}
H^{q}_{b,m}(X):=\frac{{\rm Ker\,}\ddbar_{b}:\Omega^{0,q}_m(X)\To\Omega^{0,q+1}_m(X)}{\operatorname{Im}\ddbar_{b}:\Omega^{0,q-1}_m(X)\To\Omega^{0,q}_m(X)}.
\end{equation}
Moreover,  we have \cite{hl}
\begin{equation*}\label{a1}
{\rm dim} H^q_{b, m}(X)<\infty, ~\text{for all}~ q=0, \ldots, n.
\end{equation*}

\begin{definition}\label{def-16-09-01}
	A function $u\in C^\infty(X)$ is a Cauchy-Riemann function (CR function for short)
	if $\overline\partial_bu=0$, that is $\overline Zu=0$ for all $Z\in C^{\infty}(X, T^{1, 0}X)$. For $m\in \mathbb N$, $H^0_{b, m}(X)$ is called the $m$-th positive Fourier component of the space of CR functions.
\end{definition}
We recall the canonical local coordinates (BRT coordinates) due to Baouendi-Rothschild-Treves, (see \cite{brt}).

\begin{theorem}\label{t-gue170720y}
	With the notations and assumptions above, fix $x_0\in X$. There exist local coordinates $(x_1,\cdots,x_{2n+1})=(z,\theta)=(z_1,\cdots,z_{n},\theta), z_j=x_{2j-1}+ix_{2j}, 1\leq j\leq n, x_{2n+1}=\theta$, centered at $x_0$, defined on 
	$D=\{(z, \theta)\in\mathbb C^{n}\times\mathbb R: |z|<\varepsilon, |\theta|<\delta\}$, such that 
	\begin{equation}\label{e-can}
	\begin{split}
	&T=\frac{\partial}{\partial\theta}\\
	&Z_j=\frac{\partial}{\partial z_j}+i\frac{\partial\varphi(z)}{\partial z_j}\frac{\partial}{\partial\theta},j=1,\cdots,n,
	\end{split}
	\end{equation}
	where $\{Z_j(x)\}_{j=1}^{n}$ form a basis of $T_x^{1,0}X$, for each $x\in D$ and $\varphi(z)\in C^\infty(D,\mathbb R)$ is independent of $\theta$. We call $D$ a canonical local patch and $(z, \theta, \varphi)$ canonical coordinates centered at $x_0$.
\end{theorem}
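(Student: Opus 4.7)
The plan is to combine the Poincar\'e--Lelong formula with the Boutet de Monvel--Sj\"ostrand asymptotic for the Bergman kernel of $M$, and to kill the random fluctuations by a variance plus Borel--Cantelli argument on the spheres $SA_{b_k}(M)$, in the style of Shiffman--Zelditch but with a boundary localization produced by the cutoff $kr\psi(kr)$. Since $\psi\in C^\infty_0([-1,-\tfrac12])$, the test form $\eta_k:=(2i)kr\psi(kr)\phi\wedge\pr r\wedge\ddbar r$ is compactly supported in $M$, so Poincar\'e--Lelong followed by two integrations by parts (with no boundary term) gives
$$\langle\,[u_k=0],\eta_k\,\rangle=\frac{i}{2\pi}\int_M\log|u_k(z)|^2\,\pr\ddbar\eta_k(z).$$
Writing $u_k=\sum_{j=1}^{b_k}c_{k,j}g_j$ with $c_k$ Haar-uniform on $S^{2b_k-1}$, set $K_{b_k}(z,z):=\sum_{j=1}^{b_k}|g_j(z)|^2$ and $\sigma_k(z):=K_{b_k}(z,z)^{-1/2}(\overline{g_1(z)},\ldots,\overline{g_{b_k}(z)})$. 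Then
$$\log|u_k(z)|^2=\log K_{b_k}(z,z)+\log\abs{\langle\,c_k,\sigma_k(z)\,\rangle}^2.$$

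For the \emph{deterministic} term, choose $\{b_j\}$ so that $K_{b_j}(z,z)$ tracks the true Bergman kernel $K(z,z)$ of $M$ up to second derivatives on the shell $\{-1/j\le r\le-1/(2j)\}$. By Boutet de Monvel--Sj\"ostrand~\cite{BouSj76}, $K(z,z)=a(z)(-r(z))^{-n-2}+b(z)\log(-r(z))$ with $a,b\in C^\infty(\ol M)$ and $a>0$, hence
$$\log K(z,z)=-(n+2)\log(-r(z))+\log a(z)+O(|r|\log|r|).$$
Only the $-(n+2)\log(-r)$ summand produces an $r^{-2}$ singularity under $\pr\ddbar$, and a direct computation gives $\pr\ddbar\log(-r)=\pr\ddbar r/r+\pr r\wedge\ddbar r/r^{2}$. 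Wedging against $\eta_k$ kills the $r^{-2}$ piece (since $\pr r\wedge\pr r=0$) and leaves
$$-(n+2)(2i)\int_M k\psi(kr)\,\phi\wedge\pr\ddbar r\wedge\pr r\wedge\ddbar r.$$
Coarea along $\{r=t\}$ with $k\psi(kr)\to c_0\delta_X$ collapses this integral to a boundary integral, which rewrites as $-(n+2)(i/(2\pi))c_0\int_X\mathcal{L}_X\wedge\omega_0\wedge\phi$ via $\omega_0=J(dr)$ and Definition~\ref{d-1.2}. The smoother summands $\log a$ and $O(|r|\log|r|)$ contribute $0$ because the shells have Lebesgue measure $O(1/k)$ and the relevant $\pr\ddbar$ stay bounded. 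For the \emph{random} term
$$R_k:=\frac{i}{2\pi}\int_M\log\abs{\langle c_k,\sigma_k(z)\rangle}^2\,\pr\ddbar\eta_k(z),$$
the expectation vanishes (since $\mathbb{E}\log\abs{\langle c,\sigma\rangle}^2$ is a constant in $z$ and $\pr\ddbar\eta_k$ is exact on compact support), while spherical concentration yields $\mathrm{Var}(R_k)\le C_\phi(\log b_k)^2/b_k$; choosing $b_j$ so that $\sum_j\mathrm{Var}(R_j)<\infty$, Borel--Cantelli forces $R_k\to 0$ $d\mu(\beta)$-almost surely, and a countable dense family of $\phi\in C^\infty(\ol M,B^{*n-1,n-1}M')$ extends the almost sure convergence to all test forms simultaneously.

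The principal obstacle is the joint choice of the sequence $\{b_j\}$: it must be large enough that $K_{b_j}\to K$ on shells $\{r\sim-1/j\}$ in a topology that controls $\pr\ddbar$, yet small enough that the variance series remains summable uniformly in a fixed dense family of test forms $\phi$. Arranging this diagonal argument relies on the density of $\{g_j\}$ in $H^0(M)$ together with the explicit FIO structure of the Bergman kernel from~\cite{BouSj76}, which supplies the quantitative approximation rates needed to match the two competing requirements.
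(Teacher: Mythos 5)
Your proposal does not address the statement at hand. The statement is Theorem~\ref{t-gue170720y}, the Baouendi--Rothschild--Treves normal form: for a CR manifold with a transversal CR $S^1$-action one can find local coordinates $(z,\theta)$ centered at $x_0$ in which $T=\frac{\partial}{\partial\theta}$ and $Z_j=\frac{\partial}{\partial z_j}+i\frac{\partial\varphi(z)}{\partial z_j}\frac{\partial}{\partial\theta}$ span $T^{1,0}X$, with $\varphi$ real and independent of $\theta$. This is a purely local normal-form result about the CR structure and the Reeb field; the paper does not prove it but quotes it from Baouendi--Rothschild--Treves~\cite{brt}. What you have written instead is a proof sketch of Theorem~\ref{t-gue170703c}, the equidistribution theorem for zeros of random holomorphic functions on $\ol M$: Poincar\'e--Lelong, the Boutet de Monvel--Sj\"ostrand expansion of the Bergman kernel, the boundary cutoff $kr\psi(kr)$, and the variance/Borel--Cantelli argument all belong to that second-part result (and indeed mirror the paper's Section~\ref{s-gue170709}), but none of it has any bearing on the existence of canonical coordinates.

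A proof of the actual statement would have to proceed along completely different lines: use the fact that $T$ is non-vanishing and generates a (locally free) $S^1$-action to straighten it to $\frac{\partial}{\partial\theta}$ in some coordinate chart; use the CR condition $[T,C^\infty(X,T^{1,0}X)]\subset C^\infty(X,T^{1,0}X)$ together with the formal integrability $[\mathcal V,\mathcal V]\subset\mathcal V$ and the transversality $\mathbb{C}T\oplus T^{1,0}X\oplus T^{0,1}X=\mathbb{C}TX$ to produce $n$ independent $T$-invariant CR functions near $x_0$, yielding the holomorphic coordinates $z_1,\ldots,z_n$ on the local quotient by the $T$-flow; and finally solve for the coefficient of $\frac{\partial}{\partial\theta}$ in a frame of $T^{1,0}X$, showing it can be written as $i\frac{\partial\varphi}{\partial z_j}$ for a single real $\theta$-independent potential $\varphi(z)$ (this is where the closedness condition coming from integrability enters). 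None of these steps appears in your argument, so as a proof of Theorem~\ref{t-gue170720y} it is a complete miss rather than a repairable gap.
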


Note that Theorem~\ref{t-gue170720y} holds if $X$ is not strongly pseudoconvex. 

On the BRT coordinate $D$, the action of the partial Cauchy-Riemann operator is the following
\begin{equation*}
\bar \partial_{b}u=\sum_{j=1}^{n}(\frac{\partial u}{\partial\bar z_{j}} - i\frac{\partial \varphi}{\partial\bar z_{j}}\frac{\partial u}{\partial\theta} )d\bar z_{j}.
\end{equation*}
We can check that 
\begin{equation*}
\omega_{0}=-d\theta+i\sum_{j=1}^{n}\frac{\partial\varphi}{\partial z_{j}}dz_{j}-i\sum_{j=1}^{n}\frac{\partial\varphi}{\partial \bar z_{j}}d\bar z_{j}.
\end{equation*}
Hence the Levi form is 
\begin{equation}
\mathcal{L}_{X}=-\frac{1}{2i}d\omega_{0}\big|_{T^{1,0}X}=\partial \bar \partial \varphi.
\end{equation}
If $u\in H^{0}_{b,m}(X)$, then $\bar \partial_{b}u=0$.
It is equivalent to that
\begin{equation*}
\frac{\partial u}{\partial\bar z_{j}} - i\frac{\partial \varphi}{\partial\bar z_{j}}\frac{\partial u}{\partial\theta}=0,\ \forall j.
\end{equation*}
Moreover, since $Tu=imu$, $u$ can be written locally as
\begin{equation*}
u\big|_{D}=e^{im\theta}\tilde u(z).
\end{equation*}
Then 
\begin{equation}\label{e-gue170723}
\begin{split}
&\frac{\partial \tilde u}{\partial\bar z_{j}} +m\frac{\partial \varphi}{\partial\bar z_{j}}\tilde u \\
&=\frac{\partial}{\partial\bar z_{j}}(\tilde ue^{m\varphi})=
0,\ \forall j.\\
\end{split}
\end{equation}
That is to say, $\tilde ue^{m\varphi}$ is holomorphic with respect to the $(z_{1},...,z_{n})$-coordinate. 

Let $X\times \mathbb{R}$ be the complex manifold with the following holomorphic tangent bundle and complex structure $J$,
\begin{equation}\label{e-gue170723I}
\begin{split}
&T^{1,0}X\oplus\{\mathbb{C}(T-i\frac{\partial}{\partial\eta}) \},\\
&JT=\frac{\partial}{\partial\eta}, \ Ju=iu \ \text{for}\ u\in T^{1,0}X.\\
\end{split}
\end{equation}

\begin{lemma}\label{l-gue170704w}
Let $u\in\oplus_{m\in\mathbb Z, |m|\leq N}C^{\infty}_{m}(X)$ with $\bar \partial_{b}u=0$, where $N\in\mathbb N$. Then there exists a unique function $v$, which is holomorphic in $X\times \mathbb{R}$ such that $v\big|_{\eta=0}=u$.
\end{lemma}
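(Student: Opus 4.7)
The plan is to exploit the $S^1$-symmetry that is already built into the setup. From \eqref{e-gue170723I}, the antiholomorphic tangent bundle of $X\times\Real$ is spanned by $T^{0,1}X$ together with the single extra vector field $T+i\pr/\pr\eta$. Consequently a smooth function $v$ on $X\times\Real$ is holomorphic if and only if $\ddbar_{b}v=0$ (with $\eta$ treated as a parameter) and $(T+i\pr_\eta)v=0$.

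For existence, I will decompose $u$ into $T$-eigenmodes: write $u=\sum_{|m|\leq N}u_m$ with $u_m\in C^\infty_m(X)$. Because $\ddbar_b$ commutes with $T$ by \eqref{c}, the identity $\ddbar_b u=0$ together with uniqueness of Fourier components forces $\ddbar_b u_m=0$ for each $m$. I then set
\begin{equation*}
v(x,\eta):=\sum_{|m|\leq N}e^{-m\eta}u_m(x).
\end{equation*}
This is a finite sum, so $v\in C^\infty(X\times\Real)$ and obviously $v|_{\eta=0}=u$. A direct computation gives $\ddbar_b v=\sum e^{-m\eta}\ddbar_b u_m=0$, and using $Tu_m=imu_m$ one finds $(T+i\pr_\eta)v=\sum e^{-m\eta}(imu_m-imu_m)=0$, so $v$ is holomorphic.

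For uniqueness, suppose $\widetilde v$ is any holomorphic function on $X\times\Real$ with $\widetilde v|_{\eta=0}=u$. The $S^1$-action on $X$ lifts to $X\times\Real$ by acting trivially on $\eta$; its generator remains the holomorphic vector field $T$, so the lifted action preserves the complex structure. Hence $\widetilde v$ admits an $S^1$-Fourier expansion $\widetilde v(x,\eta)=\sum_{m\in\mathbb Z}\widetilde v_m(x,\eta)$ with $T\widetilde v_m=im\widetilde v_m$, and each $\widetilde v_m$ is again holomorphic, since Fourier projection against the characters of $S^1$ commutes with both $\ddbar_b$ and $\pr_\eta$. The equation $(T+i\pr_\eta)\widetilde v_m=0$ then reduces to the ordinary differential equation $\pr_\eta\widetilde v_m=-m\widetilde v_m$, giving $\widetilde v_m(x,\eta)=e^{-m\eta}\widetilde v_m(x,0)$. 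Evaluating at $\eta=0$ and invoking $u\in\bigoplus_{|m|\leq N}C^\infty_m(X)$ and the uniqueness of the Fourier decomposition on $X$ forces $\widetilde v_m(\cdot,0)=u_m$ for $|m|\leq N$ and $\widetilde v_m\equiv 0$ for $|m|>N$, so $\widetilde v$ coincides with the $v$ constructed above.

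There is no deep obstacle; the only step that needs a moment's care is the justification of the $S^1$-Fourier expansion on the non-compact product $X\times\Real$ and the term-by-term interpretation of the holomorphicity equations. This is, however, routine: since $X$ is compact and the lifted $S^1$-action is smooth, the Fourier partial sums converge in $C^\infty(X)$ uniformly on compact subsets of $\Real$, and both $\ddbar_b$ and $\pr_\eta$ may be commuted with Fourier projection without difficulty.
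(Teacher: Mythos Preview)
Your proof is correct and follows essentially the same construction as the paper: both extend each Fourier mode $u_m$ to $e^{-m\eta}u_m$, which in BRT coordinates is exactly the paper's formula $\tilde u_m(z)e^{im(\theta+i\eta)}$. Your presentation is arguably cleaner since you define $v$ globally from the start, whereas the paper writes the formula in a BRT chart and then invokes uniqueness to glue; for uniqueness the paper simply observes that a holomorphic function vanishing on the real hypersurface $\{\eta=0\}$ vanishes identically, which is shorter than your Fourier-plus-ODE argument but equivalent in content.
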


\begin{proof}
Let $D$ be a canonical local coordinate patch with canonical local coordinates $x=(z,\theta)$. On $D$, we write $u=\sum_{m\in\mathbb Z, \abs{m}\leq N}u_{m}(z)e^{im\theta}$. Note that in canonical local coordinates $x=(z,\theta)$, we have 
$T=\frac{\partial}{\partial\theta}$. Set
\begin{equation*}
v:=\sum_{m\in\mathbb Z, \abs{m}\leq N}u_{m}(z)e^{im(\theta+i\eta)}.
\end{equation*}
From $\ddbar_bu=0$, it is easy to check that $v$ is holomorphic on $D\times\Real$ with respect the complex structure \eqref{e-gue170723I} and $v|_{\eta=0}=u$. 
If there exists another function $\tilde v$ satisfying the same properties.
Then $\tilde v-v$ is holomorphic, $(\tilde v-v)\big|_{\eta=0}=0$.
So $\tilde v=v$. Thus, we can define $v$ as a global CR function on $X\times\Real$ and we have  $v|_{\eta=0}=u$. The proof is completed.
\end{proof} 

\section{Uniform estimate of Szeg\H{o} kernel functions}\label{s-gue170704}

In this section, we will give a uniform estimate of Szeg\H{o} kernel function on $X$. We keep the notations and assumptions in the  previous sections. 
We first recall a recent result about Szeg\H{o} kernel asymptotic expansion on CR manifolds with $S^1$ action due to Herrmann-Hsiao-Li
\cite{hhl}. 

For $x, y\in X$, let $d(x,y)$ denote the Riemannian distance between $x$ and $y$ induced by $\langle\,\cdot\,|\,\cdot\,\rangle$. Let $A$ be  a closed subset of $X$. Put 
$d(x,A):=\inf\set{d(x,y);\, y\in A}$.  

\begin{theorem}\label{t-gue170704}
Recall that we work with the assumptions that $X$ is a compact connected strongly pseudoconvex CR manifold of dimension $2n+1$, $n\geq 1$, with a transversal CR $S^1$ action. With the above notations for $X_{p_{r}}, 0\leq r\leq t-1$, there are $b_{j}(x)\in C^{\infty}(X)$, $j=0,1,2,\ldots$, such that for any $r=0,1,\ldots,t-1$, any differential operator $P_{\ell}:C^{\infty}(X)\To C^{\infty}(X)$ of order $\ell\in\mathbb{N}_{0}$ and every $N\in \mathbb{N}$, there are $\varepsilon_{0}>0$ and $C_{N}$ independent of $m$ with the following estimate
\begin{equation*}
\begin{split}
&\Bigl|P_{\ell}\Bigl(S_{m} (x) -\sum_{s=1} ^{p_{r}}e^{\frac{2\pi(s-1)}{p_{r}}mi} \sum_{j=0}^{N-1}m^{n-j}b_{j}(x)
\Bigr)\Bigr|\\
&\leq C_{N}
\Bigl(m^{n-N}+m^{n+\frac{\ell}{2}} e^{-m\varepsilon_{0}d(x,X_{sing}^{r+1})^{2} }    \Bigr),\ \forall m\geq 1,\ \forall x\in X_{p_{r}}, \\
\end{split}
\end{equation*}
where $b_{0}(x)\geq \epsilon>0$ on $X$ for some universal constant $\epsilon$.
\end{theorem}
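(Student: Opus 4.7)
The plan is to recover $S_m$ from the full Szeg\H{o} projector by an $S^1$ Fourier decomposition and then apply complex stationary phase using the Boutet de Monvel--Sj\"ostrand description of the Szeg\H{o} kernel.

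\textbf{Step 1: $S^1$ Fourier cutoff.} Let $S(x,y)$ denote the distribution kernel of the Szeg\H{o} projector $S:L^2(X)\To\Ker\ddbar_b$. Since the $S^1$ action is transversal CR and isometric, it commutes with $\ddbar_b$, hence with $S$. Viewing $H^0_{b,m}(X)$ as the image of $S$ on the $m$-th Fourier component of $L^2(X)$, one obtains
\begin{equation*}
S_m(x)=\frac{1}{2\pi}\int_0^{2\pi}S(x,e^{-i\theta}\circ x)\,e^{im\theta}\,d\theta.
\end{equation*}

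\textbf{Step 2: BdMS representation and rescaling.} By Boutet de Monvel--Sj\"ostrand, modulo $C^\infty$,
\begin{equation*}
S(x,y)=\int_0^\infty e^{it\varphi(x,y)}s(x,y,t)\,dt,\qquad s(x,y,t)\sim\sum_{j\geq0}s_j(x,y)t^{n-j},
\end{equation*}
with a complex phase $\varphi$ satisfying $\mathrm{Im}\,\varphi\geq 0$, vanishing to second order on the diagonal, with $d_x\varphi|_{x=y}=-\omega_0$. Substituting into Step~1 and changing $t=m\sigma$ gives, up to $O(m^{-\infty})$,
\begin{equation*}
S_m(x)=\frac{m}{2\pi}\iint e^{im\Phi_x(\sigma,\theta)}s(x,e^{-i\theta}\circ x,m\sigma)\,d\sigma\,d\theta,\qquad\Phi_x(\sigma,\theta):=\sigma\varphi(x,e^{-i\theta}\circ x)+\theta.
\end{equation*}

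\textbf{Step 3: Complex stationary phase on a stratum.} The phase $\Phi_x$ is critical in $(\sigma,\theta)$ precisely when $e^{-i\theta}\circ x=x$; for $x\in X_{p_r}$ these are the $p_r$ angles $\theta_s=\frac{2\pi(s-1)}{p_r}$, $s=1,\ldots,p_r$, with $\sigma_s$ determined by $\langle\omega_0,T\rangle=-1$. Transversality of $T$ and positivity of $\mathcal L_X$ yield a positive definite imaginary Hessian, and $\Phi_x(\sigma_s,\theta_s)=\theta_s$ supplies the factor $e^{\frac{2\pi(s-1)}{p_r}mi}$. The Melin--Sj\"ostrand complex stationary phase lemma then produces
\begin{equation*}
S_m(x)\sim\sum_{s=1}^{p_r}e^{\frac{2\pi(s-1)}{p_r}mi}\sum_{j\geq0}m^{n-j}b_j(x),
\end{equation*}
with $b_0(x)$ expressed as a ratio of the leading BdMS symbol to a Hessian determinant; strong pseudoconvexity ensures $b_0(x)\geq\epsilon>0$ uniformly on $X$, and smoothness of $b_j$ follows since the critical manifolds extend smoothly across $X$.

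\textbf{Step 4: Uniformity near the singular strata.} As $x$ approaches $X_{sing}^{r+1}$, additional critical points of $\Phi_x$ (coming from the larger isotropy at nearby points of higher strata) drift toward the $p_r$ critical points already present. Applying complex stationary phase with a parameter, the coalescing contribution is controlled by a Gaussian of width $m^{-1/2}$ in the distance to $X_{sing}^{r+1}$; differentiating this Gaussian factor $\ell$ times in $x$ via $P_\ell$ loses at most $m^{\ell/2}$, yielding the asserted bound $C_N\bigl(m^{n-N}+m^{n+\ell/2}e^{-m\varepsilon_0 d(x,X_{sing}^{r+1})^2}\bigr)$.

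The main obstacle is Step~4: on any single stratum the expansion reduces to a standard complex stationary phase computation, but making the remainder uniform across the stratification requires a careful parameter-dependent version that simultaneously keeps the coefficients $b_j$ smooth on all of $X$ and produces the Gaussian $e^{-m\varepsilon_0 d(x,X_{sing}^{r+1})^2}$ with the correct $m^{n+\ell/2}$ prefactor after $\ell$ derivatives. This is the technical heart of \cite{hhl}, and the rest of the argument is bookkeeping around it.
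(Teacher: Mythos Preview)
The paper does not prove this theorem at all: it is stated explicitly as a recall of the main result of Herrmann--Hsiao--Li~\cite{hhl} (``We first recall a recent result about Szeg\H{o} kernel asymptotic expansion on CR manifolds with $S^1$ action due to Herrmann-Hsiao-Li''), and is then used as a black box for Corollaries~\ref{c-gue170722a}--\ref{c-gue170721cw} and Theorem~\ref{t-gue170704r}. So there is no proof in the paper to compare against.

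Your sketch is a faithful outline of how~\cite{hhl} actually proceeds --- Fourier averaging of the full Szeg\H{o} projector, the Boutet de Monvel--Sj\"ostrand parametrix, and Melin--Sj\"ostrand complex stationary phase at the finitely many isotropy angles --- and you yourself identify Step~4 (the uniform parameter-dependent stationary phase near higher strata) as the technical heart of~\cite{hhl}. That is accurate, and it is also precisely why the present paper simply quotes the result rather than reproving it. As a proof \emph{proposal} your outline is correct in spirit, but what you have written is a roadmap rather than a proof: the genuine work --- constructing the $b_j$ globally and smoothly across strata, and obtaining the Gaussian remainder with the $m^{n+\ell/2}$ prefactor uniformly in $x\in X_{p_r}$ as $x\to X_{sing}^{r+1}$ --- is deferred to~\cite{hhl}, exactly as the paper itself does.
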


Note that when $m$ is a multiple of $p_{r}$, then $\sum_{s=1} ^{p_{r}}e^{\frac{2\pi(s-1)}{p_{r}}mi}$ is equal to $p_{r}$. When
$m$ is not a multiple of $p_{r}$, then $\sum_{s=1} ^{p_{r}}e^{\frac{2\pi(s-1)}{p_{r}}mi}$ is equal to $0$.

\begin{corollary}\label{c-gue170722a}
With the above notations and assumptions, we have
\begin{equation*}
S_{m}(x)\leq Cm^{n}, \ \forall m\geq 1, \ x\in X,
\end{equation*}
where $C>0$ is a constant independent of $m$. 

Fix $r=0,1,\ldots,t-1$. There is a $m_0>0$ such that for every $m\geq m_0$, $p_r|m$, we have 
\begin{equation*}
S_{m}(x)\geq m^{n}(p_rb_{0}(x)-c_{1}e^{-m\varepsilon_{0}d(x,X_{{\rm sing\,}}^{r+1})^{2}}-c_1\frac{1}{m})
\end{equation*}
for any $x\in X_{p_{r}}$, where $c_1>0$ is a constant independent of $m$. 
\end{corollary}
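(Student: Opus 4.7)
The plan is to derive both inequalities directly from Theorem~\ref{t-gue170704} by specializing to $N=1$ and the identity operator $P_\ell = \mathrm{Id}$ (so $\ell=0$), and then combining the finitely many pieces $X = \bigcup_{r=0}^{t-1} X_{p_r}$. The only arithmetic input is the elementary identity
\begin{equation*}
\sum_{s=1}^{p_r} e^{\frac{2\pi(s-1)mi}{p_r}} =
\begin{cases} p_r, & p_r \mid m, \\ 0, & p_r \nmid m, \end{cases}
\end{equation*}
so in particular this sum is bounded in modulus by $p_r$ in all cases.

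For the upper bound, fix $r \in \{0,1,\ldots,t-1\}$ and $x \in X_{p_r}$. Applying Theorem~\ref{t-gue170704} with $N=1$, $\ell=0$, and $P_0 = \mathrm{Id}$ gives
\begin{equation*}
\bigl| S_m(x) - \textstyle\sum_{s=1}^{p_r} e^{\frac{2\pi(s-1)mi}{p_r}} m^n b_0(x) \bigr| \le C_1\bigl( m^{n-1} + m^n e^{-m\varepsilon_0 d(x,X_{\mathrm{sing}}^{r+1})^2}\bigr).
\end{equation*}
Bounding the exponential by $1$, the trigonometric sum by $p_r$, and using that $b_0$ is continuous on the compact set $X$ (hence bounded), I get $S_m(x) \le C_r m^n$ on $X_{p_r}$ with $C_r$ independent of $m$. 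Taking $C = \max_r C_r$ over the finite decomposition yields the uniform bound $S_m(x) \le C m^n$ for all $x \in X$ and all $m \ge 1$.

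For the lower bound, fix $r$ and restrict to multiples $m$ of $p_r$. Then the trigonometric sum is exactly $p_r$, so the same inequality rearranges to
\begin{equation*}
S_m(x) \ge p_r m^n b_0(x) - C_1\bigl( m^{n-1} + m^n e^{-m\varepsilon_0 d(x,X_{\mathrm{sing}}^{r+1})^2}\bigr),
\end{equation*}
valid for every $x \in X_{p_r}$. Factoring out $m^n$ and absorbing $C_1$ into a single constant $c_1$ independent of $m$ gives exactly the stated inequality, with $m_0$ chosen so that the asymptotic expansion is valid for $m \ge m_0$.

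There is no real obstacle here; the content is entirely in Theorem~\ref{t-gue170704}. The only thing to watch is that, on each stratum $X_{p_r}$ with $r<t-1$, the distance $d(x,X_{\mathrm{sing}}^{r+1})$ can be arbitrarily small for points $x$ approaching the higher-codimension strata, which prevents the exponential term from being a uniform $o(1)$ correction. For the upper bound this is harmless since we only use $e^{-m\varepsilon_0 d^2} \le 1$; for the lower bound we retain the exponential in the statement precisely because it cannot be discarded uniformly, which is why the constant $b_0(x)>0$ cannot be replaced by a uniform positive lower bound for $S_m$ on all of $X$ at a single frequency $m$.
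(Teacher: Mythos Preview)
Your proof is correct and is exactly the intended derivation: the paper states Corollary~\ref{c-gue170722a} without proof, as an immediate consequence of Theorem~\ref{t-gue170704} via the specialization $N=1$, $\ell=0$, $P_0=\mathrm{Id}$ together with the root-of-unity sum identity you wrote down. One minor remark: Theorem~\ref{t-gue170704} already holds for all $m\ge 1$, so no separate choice of $m_0$ is needed for validity of the expansion; the $m_0$ in the statement is cosmetic (one may take $m_0=1$).
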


\begin{corollary}\label{c-gue170722Hyc}
With the above notations and assumptions, let $r=0$, we have
\begin{equation*}
\lim_{m\To\infty}\frac{S_{m}(x)}{m^{n}}=b_{0}(x), \ \forall x\in X_{{\rm reg\,}}.
\end{equation*}
\end{corollary}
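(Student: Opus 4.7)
The plan is to deduce Corollary~\ref{c-gue170722Hyc} as a direct consequence of Theorem~\ref{t-gue170704}, applied in the case $r=0$. Recall that under the simplifying assumption $p=1$, we have $p_0=1$ and $X_{p_0}=X_{\rm reg}$. In this case the exponential sum appearing in the asymptotic simplifies drastically:
\[
\sum_{s=1}^{p_0}e^{\frac{2\pi(s-1)}{p_0}mi}=e^{0}=1\qquad\text{for every }m\geq 1,
\]
so the $r=0$ branch of Theorem~\ref{t-gue170704} already gives a genuine asymptotic expansion of $S_m(x)$ in powers of $m$ on $X_{\rm reg}$, with no oscillatory prefactor to worry about.

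Concretely, I would take $\ell=0$ (the identity operator) and $N=1$ in Theorem~\ref{t-gue170704} with $r=0$. This yields a constant $C_1>0$ and $\varepsilon_0>0$, independent of $m$, such that
\begin{equation*}
\bigl|S_m(x)-m^n b_0(x)\bigr|\leq C_1\Bigl(m^{n-1}+m^n e^{-m\varepsilon_0 d(x,X_{\rm sing}^1)^2}\Bigr),\qquad \forall m\geq 1,\ \forall x\in X_{\rm reg}.
\end{equation*}
Dividing through by $m^n$ gives
\begin{equation*}
\Bigl|\tfrac{S_m(x)}{m^n}-b_0(x)\Bigr|\leq C_1\Bigl(\tfrac{1}{m}+e^{-m\varepsilon_0 d(x,X_{\rm sing}^1)^2}\Bigr).
\end{equation*}

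Now I would fix any $x\in X_{\rm reg}$ and observe that, since $X_{\rm sing}^1=X_{\rm sing}$ is closed in $X$ (by the Proposition following the definition of $X_{\rm sing}^r$) and $x\notin X_{\rm sing}$, we have $d(x,X_{\rm sing}^1)>0$. Consequently the second term on the right-hand side decays exponentially in $m$, while the first decays like $1/m$. Letting $m\to\infty$ therefore gives $S_m(x)/m^n\to b_0(x)$, which is exactly the claim.

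No real obstacle is present here; the entire content of the corollary is already packaged into the $r=0$ case of Theorem~\ref{t-gue170704}, and the only subtlety is the elementary but essential observation that the pointwise distance $d(x,X_{\rm sing})$ is strictly positive for $x\in X_{\rm reg}$, which renders the exponential error term negligible at every fixed interior point (though of course not uniformly up to $X_{\rm sing}$, which is why the convergence in Theorem~\ref{t-gue170704} is only locally uniform on $X_{\rm reg}$).
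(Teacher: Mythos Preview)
Your proof is correct and is exactly the intended derivation: the paper states this corollary without proof, as an immediate consequence of Theorem~\ref{t-gue170704} with $r=0$, $\ell=0$, $N=1$, together with the positivity of $d(x,X_{\rm sing})$ for $x\in X_{\rm reg}$.
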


Let $x, x_1\in X$. We have 
\begin{equation}\label{e-gue170721cw}
\begin{split}
&S_m(x)=S_m(x_1)+R_m(x,x_1),\\
&R_m(x,x_1)=\int^1_0\frac{\pr}{\pr t}\Bigr(S_m(tx+(1-t)x_1)\Bigr)dt.
\end{split}
\end{equation}
By Theorem~\ref{t-gue170704} with $l=1$, 
we have the following

\begin{corollary}\label{c-gue170721cw}
We have
\[\abs{R_m(x,x_1)}\leq c_2m^{n+\frac{1}{2}}d(x,x_1),\ \ \forall (x,x_1)\in X\times X,\]
where $c_{2}>0$ is a constant independent of $m$.
\end{corollary}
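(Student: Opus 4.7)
The plan is to derive a uniform gradient estimate $\|\nabla S_m(x)\| \leq C m^{n+1/2}$ on $X$ and then conclude by the fundamental theorem of calculus. First I would interpret the integrand $\frac{\partial}{\partial t}S_m(tx+(1-t)x_1)$ in \eqref{e-gue170721cw} as $\langle dS_m(\gamma(t)), \gamma'(t)\rangle$ along a piecewise smooth curve $\gamma$ joining $x_1$ to $x$ whose length is comparable to $d(x,x_1)$; such a curve exists on the compact Riemannian manifold $X$, so once $\sup_X \|\nabla S_m\| \leq Cm^{n+1/2}$ is known, the estimate $|R_m(x,x_1)| \leq c_2 m^{n+1/2}\,d(x,x_1)$ is immediate from integrating along $\gamma$.

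To obtain the gradient estimate I would apply Theorem~\ref{t-gue170704} with $\ell = 1$, $N = 1$, and $P_1$ ranging over first-order differential operators of bounded coefficients (locally $\partial/\partial x_j$). For $x$ in a fixed stratum $X_{p_r}$ this yields
\begin{equation*}
\Bigl|P_1 S_m(x) - P_1\Bigl[\sum_{s=1}^{p_r} e^{\frac{2\pi(s-1)}{p_r}mi}\, m^n\, b_0(x)\Bigr]\Bigr| \leq C\bigl(m^{n-1} + m^{n+1/2} e^{-m\varepsilon_0 d(x, X_{sing}^{r+1})^2}\bigr).
\end{equation*}
Since $b_0 \in C^\infty(X)$, the derivative of the leading term is $O(m^n)$ (the inner sum over $s$ is bounded by $p_r$ in modulus, and $P_1 b_0$ is uniformly bounded). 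The exponential on the right is bounded by $1$. Combining these observations gives $|P_1 S_m(x)| \leq C_r m^{n+1/2}$ on $X_{p_r}$.

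Since there are only finitely many strata $X_{p_0},\ldots,X_{p_{t-1}}$, taking the maximum of the constants $C_r$ produces a uniform bound $|P_1 S_m(x)| \leq Cm^{n+1/2}$ for all $x \in X$, and hence $\|\nabla S_m\|_{L^\infty(X)} \leq c_2 m^{n+1/2}$ for a constant $c_2$ independent of $m$. The main (rather mild) obstacle is the stratified nature of the asymptotic expansion in Theorem~\ref{t-gue170704}: the estimate is stated separately on each stratum, and a priori the derivative of the expansion across stratum boundaries could behave irregularly. However, $S_m$ itself is globally smooth on $X$, every point lies in exactly one stratum, and the number of strata is finite, so the pointwise bound transfers to a uniform bound on all of $X$. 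After that, the desired inequality on $R_m(x,x_1)$ is just the mean value inequality for a smooth function with controlled gradient.
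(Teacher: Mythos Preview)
Your proposal is correct and follows exactly the approach the paper indicates: the paper's entire proof is the single sentence ``By Theorem~\ref{t-gue170704} with $\ell=1$,'' and you have supplied precisely the details that this sentence encodes --- a uniform $m^{n+1/2}$ bound on first derivatives of $S_m$ obtained by bounding each stratum separately and then the mean value inequality along a path. Your remark about interpreting the integrand in \eqref{e-gue170721cw} along a curve of length comparable to $d(x,x_1)$ is the right way to make sense of the informal ``$tx+(1-t)x_1$'' on a manifold.
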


The main result in this section is the following
\begin{theorem}\label{t-gue170704r}
There exist positive integers $k_{1}<\cdot\cdot\cdot<k_{t-1}$ independent of $m$ and $m_0>0$, such that for all $m\geq m_0$ with $p_{j}|m$, $j=0,1,\ldots,t-1$, we have
\begin{equation*}
\frac{1}{C}m^n\leq S_{m}(x)+S_{k_{1}m}(x)+\cdot\cdot\cdot+S_{k_{t-1}m}(x)\leq Cm^{n}, \\ \forall x\in X,
\end{equation*}
where $S_{k_{j}m}(x)$ is the Szeg\H{o} kernel function associated to $H^0_{b,k_{j}m}(X)$ and $C>1$ is a constant independent of $m$. 
\end{theorem}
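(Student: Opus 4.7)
My plan separates the upper and lower bounds.

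The upper bound is immediate: by the first assertion of Corollary~\ref{c-gue170722a}, $S_{k_jm}(x)\le C(k_jm)^n=Ck_j^n m^n$, so once the $k_j$ are fixed one gets
\[
S_m(x)+\sum_{j=1}^{t-1}S_{k_jm}(x)\le C(1+k_1^n+\cdots+k_{t-1}^n)m^n.
\]

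For the lower bound the plan is to show that at every $x\in X$ at least one of the $t$ summands $S_m(x),S_{k_1m}(x),\ldots,S_{k_{t-1}m}(x)$ is $\ge cm^n$ for $m\ge m_0$. Fix $x$, and let $r$ be the unique index with $x\in X_{p_r}$. The divisibility hypothesis $p_j\mid m$ forces $p_r\mid km$ for every positive integer $k$, so the second assertion of Corollary~\ref{c-gue170722a} applies to every multiple $km$:
\[
S_{km}(x)\ge (km)^n\Bigl(p_r b_0(x)-c_1 e^{-km\varepsilon_0 d(x,X_{\rm sing}^{r+1})^2}-\tfrac{c_1}{km}\Bigr).
\]
This alone already yields $S_{km}(x)\ge c(km)^n$ once $d(x,X_{\rm sing}^{r+1})^2>D/(km)$ for a constant $D$ depending only on $p_r b_0, c_1, \varepsilon_0$.

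The troublesome $x\in X_{p_r}$ are those with $d(x,X_{\rm sing}^{r+1})\le\sqrt{D/(km)}$. For such $x$ I would pick a nearest $y\in X_{\rm sing}^{r+1}$; necessarily $y\in X_{p_s}$ for some $s>r$ and $d(x,y)\le\sqrt{D/(km)}$. Corollary~\ref{c-gue170721cw} then gives
\[
S_{km}(x)\ge S_{km}(y)-c_2(km)^{n+1/2}d(x,y)\ge S_{km}(y)-c_2\sqrt{D}\,(km)^n,
\]
so any lower bound of the form $S_{km}(y)\ge c'(km)^n$ on the deeper stratum is inherited by $x$ with a $k$-independent constant loss provided $c'>c_2\sqrt{D}$. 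This reduces the problem at $x\in X_{p_r}$ to the analogous problem at $y\in X_{p_s}$, $s>r$, and sets up a descending induction on the stratum index.

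The base case of this induction is $r=t-1$: since $X_{\rm sing}^{t}=\emptyset$, the exponential error term vanishes and Corollary~\ref{c-gue170722a} gives $S_m\ge cm^n$ uniformly on $X_{p_{t-1}}$ for $m\ge m_0$. At the inductive step, one chooses a new integer $k_j$ so that the direct estimate for $S_{k_jm}$ handles $X_{p_r}$ outside a $1/\sqrt{k_jm}$-neighborhood of $X_{\rm sing}^{r+1}$ while the continuity transfer from the inductively handled deeper strata covers that residual neighborhood; relabeling yields the required $k_1<\cdots<k_{t-1}$. The main obstacle is that both the direct-estimate bad region and the continuity-allowed neighborhood scale like $1/\sqrt{km}$, so the decisive issue is whether the accumulated multiplicative constant $c_2\sqrt{D}$ through all $t$ strata stays below the base-case lower bound $p_{t-1}b_0/2$; verifying this by iterating the continuity transfer through the chain of strata, and then choosing the integer parameters $k_j$ consistently with this combinatorial structure, is the technical heart of the argument.
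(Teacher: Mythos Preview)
Your overall strategy---descending induction on the stratum index, combining the direct lower bound from Corollary~\ref{c-gue170722a} away from $X_{\rm sing}^{r+1}$ with a continuity transfer via Corollary~\ref{c-gue170721cw} on the residual neighborhood---is exactly the paper's approach, and the upper bound is handled identically.

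However, the obstacle you flag at the end is real, and the transfer you wrote down does not resolve it. You transfer $S_{km}$ itself across its own bad region of width $\sqrt{D/(km)}$; since the Lipschitz bound on $S_{km}$ scales like $(km)^{n+1/2}$, the loss $c_2\sqrt{D}\,(km)^n$ is a \emph{fixed} fraction of the target bound, independent of $k$. Chaining through up to $t-1$ strata with this same $k$ therefore incurs a total loss $(t-1)c_2\sqrt{D}$ in the leading constant, and there is no a priori reason this is smaller than the base-case constant $\sim p_{t-1}\inf b_0$. Introducing several integers $k_j$ does not help as long as you keep transferring the kernel at the \emph{same} level that defines the bad region.

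The paper's resolution decouples these two roles. At the inductive step one transfers the \emph{previously assembled sum} $A_m=S_m+S_{k_1m}+\cdots+S_{k_{t-1-j_0}m}$ (old levels, whose Lipschitz constants are already fixed) across the bad region of width $C_0/\sqrt{k\,m}$, where $k=k_{t-j_0}$ is the \emph{new} integer being chosen. Writing $A_m(x)=A_m(x_2)(1+v_m)$ for the nearest $x_2\in X_{\rm sing}^{j_0}$, Corollary~\ref{c-gue170721cw} gives $|v_m|\lesssim C_0/\sqrt{k}$, so taking $k$ large preserves, say, half of the inductive lower bound on the bad region; meanwhile $S_{km}$ handles the complement directly. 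This decoupling---the kernel being transferred lives at the old levels, while the bad region is shrunk by the new, large $k$---is precisely what makes the relative error go to zero with $k$ and lets the induction close without any accumulation of losses. It is the missing idea in your sketch.
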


\begin{proof}
Put $X^0_{{\rm sing\,}}:=X_{{\rm reg\,}}$. We claim that for every $j\in\set{0,1,\ldots,t-1}$, we can find $k_0:=1<k_1<\cdots<k_{t-1-j}$ and $m_0>0$ such that for all $m\geq m_0$ with $p_s|m$, $s=j, j+1,\ldots,t-1$, we have 
\begin{equation}\label{e-gue170722y}
\frac{1}{C}m^n\leq S_{m}(x)+S_{k_{1}m}(x)+\cdot\cdot\cdot+S_{k_{t-1-j}m}(x)\leq Cm^{n},\ \ \forall x\in X^j_{{\rm sing\,}},
\end{equation}
where $C>1$ is a constant independent of $m$. 

We prove the claim \eqref{e-gue170722y} by induction over $j$. Let $j=t-1$. Since $X^t_{{\rm sing\,}}=\emptyset$, by Theorem~\ref{t-gue170704}, we see that for all $m\gg1$ with $p_{t-1}|m$, we have 
\[S_m(x)\approx m^n\ \ \mbox{on $X^{t-1}_{{\rm sing\,}}$}.\]
The claim \eqref{e-gue170722y} holds for $j=t-1$. Assume that the claim \eqref{e-gue170722y} holds for some $0<j_0\leq t-1$. We are going to prove the claim \eqref{e-gue170722y} holds for $j_0-1$. By induction assumption, there exist positive integers $k_0:=1<k_{1}<\cdot\cdot\cdot<k_{t-1-j_0}$ independent of $m$ and $m_0>0$ such that for all $m\geq m_0$ with $p_s|m$, $s=j_0,j_0+1,\ldots,t-1$, we have
\begin{equation}\label{e-gue170722yI}
\frac{1}{C}m^n\leq A_m(x):=S_{m}(x)+S_{k_{1}m}(x)+\cdot\cdot\cdot+S_{k_{t-1-j_0}m}(x)\leq Cm^{n}, \ \ \forall x\in X^{j_0}_{{\rm sing\,}},
\end{equation}
where $C>1$ is a constant independent of $m$. In view of Corollary~\ref{c-gue170722a}, we see that there is a large constant $C_0>1$ and $m_1>0$ such that for all
$m\geq m_1$ with $p_{j_0-1}|m$ and all $x\in X_{p_{j_0-1}}$ with $d(x, X^{j_0}_{{\rm sing\,}})\geq\frac{C_0}{\sqrt{m}}$, we have 
\begin{equation}\label{e-gue170722yc}
S_m(x)\geq cm^n,
\end{equation}
where $c>0$ is a constant independent of $m$. Fix $C_0>0$, where $C_0$ is as in the discussion before \eqref{e-gue170722yc} and let $k\in\mathbb N$ and $m\gg1$ with $p_s|m$, $s=j_0,j_0+1,\ldots,t-1$. Consider the set 
\[S_{k,m}:=\set{x\in X_{p_{j_0-1}};\, d(x, X^{j_0}_{{\rm sing\,}})\leq\frac{C_0}{\sqrt{km}}}.\]
Let $x\in S_{k,m}$. 
Since $X^{j_0}_{{\rm sing\,}}$ is a closed subset of $X$ by Proposition 2.2, there is a point $x_2\in X^{j_0}_{{\rm sing\,}}$ such that $d(x,x_{2})=d(x,X^{j_0}_{{\rm sing\,}})$.
By \eqref{e-gue170721cw}, we write
\begin{equation}\label{e-gue170722pI}
\begin{split}
&A_m(x)=S_{m}(x)+S_{k_{1}m}(x)+\cdot\cdot\cdot+S_{k_{t-1-j_0}m}(x)\\
&=\Bigr(S_m(x_2)+S_{k_{1}m}(x_2)+\cdot\cdot\cdot+S_{k_{t-1-j_0}m}(x_2)\Bigr)\\
&\quad\quad+\Bigr(R_m(x,x_2)+R_{k_{1}m}(x,x_2)+\cdot\cdot\cdot+R_{k_{t-1-j_0}m}(x,x_2)\Bigr)\\
&=A_{m}(x_{2})(1+v_{m}(x,x_2)),
\end{split}
\end{equation}
where 
\[v_m(x,x_2):=(A_m(x_2))^{-1}\Bigr(R_m(x,x_2)+R_{k_{1}m}(x,x_2)+\cdot\cdot\cdot+R_{k_{t-1-j_0}m}(x,x_2)\Bigr).\]
Then with Corollary~\ref{c-gue170721cw}, 
\begin{equation}\label{e-gue170722p}
|v_{m}|\lesssim \frac{C_0}{\sqrt{km}}m^{-n}m^{n+\frac{1}{2}}\lesssim \frac{C_0}{\sqrt{k}}.
\end{equation}
From \eqref{e-gue170722pI} and \eqref{e-gue170722p}, we see that there is a large constant $k_{t-j_0}$ and $m_2>0$ such that for all $m\geq m_2$ with $p_s|m$, $s=j_0,j_0+1,\ldots,t-1$, we have 
\begin{equation}\label{e-gue170722pII}
A_m(x)\geq\hat c m^n,\ \ \forall x\in S_{k_{t-j_0}m}:=\set{x\in X_{p_{j_0-1}};\, d(x, X^{j_0}_{{\rm sing\,}})\leq\frac{C_0}{\sqrt{k_{t-j_0}m}}},
\end{equation}
where $\hat c>0$ is a constant independent of $m$. In view of \eqref{e-gue170722yc}, we see that  for all $m\geq\max\set{m_1, m_2}$ with $p_{j_0-1}|m$, we have 
\begin{equation}\label{e-gue170722yca}
S_{k_{t-j_0}m}(x)\geq \Td cm^n,\ \ \mbox{$\forall x\in X_{p_{j_0-1}}$ with $d(x, X^{j_0}_{{\rm sing\,}})\geq\frac{C_0}{\sqrt{k_{t-j_0}m}}$},
\end{equation}
where $\Td c>0$ is a constant independent of $m$. From \eqref{e-gue170722yca} and \eqref{e-gue170722pII}, we get the  claim \eqref{e-gue170722y} for $j=j_0-1$. By induction assumption, we get the claim \eqref{e-gue170722y} and the theorem follows then. 
\end{proof}

\section{Equidistribution on CR manifolds}

This section is devoted to proving Theorem~\ref{t-gue170704ryz}.  For simplicity, we assume that $X=X_{p_0}\bigcup X_{p_1}$, $p_0=1$. The proof of general case is similar. 
Let $k_1$ be as in Theorem~\ref{t-gue170704r}. 
Let $\alpha=[1,p_1]=p_1$. 
We recall some notations used in Section~\ref{s-gue170727}. For each $m\in\mathbb N$, put $A_m(X):=H^0_{b,\alpha m}(X)\bigcup H^0_{b,\alpha k_1m}(X)$, $SA_m(X):=\set{g\in A_m(X);\, (\,g\,|\,g\,)=1}$ and let 
$d\mu_m$ to denote the normalized Haar measure on the unit sphere $SA_m(X)$.  We consider the probability space $\Omega(X):=\prod^\infty_{m=1}SA_m(X)$ with the probability measure $d\mu:=\prod^\infty_{m=1}d\mu_m$. 

We first recall briefly the Lelong-Poincar\'{e} formula (see \cite[III-2.15]{dj} and \cite[Theorem 2.3.3]{mm}). 
\begin{proposition}\label{p-gue1804}
Let $Y$ be a complex manifold and $h$ be a meromorphic function on $Y$, which does not vanish identically on any connected component of $Y$. Then $h$ is locally integrable on $Y$ and satisfies the following
\begin{equation}\label{e-gue180404}
\langle[h=0],w\rangle=\int_{\{h=0\}}w=\frac{i}{\pi}\int\partial\ddbar\log|h|\wedge w,
\end{equation}
where $w$ is any test form on $Y$.
\end{proposition}

Let $u\in S A_m(X)$ and let $v(z,\theta,\eta)$ be holomorphic function on $X\times\Real$ with $v|_{\eta=0}=u$. 
For simplicity, let $m_1:=\alpha m$, $m_2:=\alpha k_1m$. On $D$, we write 
\begin{equation*}
u=u_{1}+u_{2}=\tilde u_{1}(z)e^{im_{1}\theta}+\tilde u_{2}(z)e^{im_{2}\theta}\in H^{0}_{b,m_{1}}(X)\oplus H^{0}_{b,m_{2}}(X).
\end{equation*}
Then, 
\begin{equation*}
v=\tilde u_{1}(z)e^{im_{1}\theta-m_{1}\eta}+\tilde u_{2}(z)e^{im_{2}\theta-m_{2}\eta}.
\end{equation*}

Let $g\in\Omega_{0}^{2n}(X\times\Real)$, $\langle[v=0], g\rangle$ is defined in \eqref{e-reI}.
Denote by $\tilde\partial$ (resp. $\bar{\tilde\partial}$) the $\partial$-operator (resp. $\ddbar$-operator)
with respect to the complex structure in \eqref{e-gue180419}. Since $v$ is holomorphic with the complex structure, then by the Lelong-Poincar\'{e} formula, we have
\begin{equation}\label{e-gue1804191}
\langle[v=0], g\rangle=\frac{i}{2\pi}\int\tilde\partial\bar{\tilde\partial}\log|v|^{2}\wedge g,
\end{equation}
This is globally defined which is independent of the choice of BRT coordinates in the following local calculation. 

Let $D$ be a local BRT canonical coordinate patch with canonical local coordinates $(z,\theta,\varphi)$. Let $x=(x_1,\ldots,x_{2n+1})=(z,\theta)$, $z_j=x_{2j-1}+ix_{2j}$, $j=1,\ldots,n$. 
We choose a partition of unity $\{\psi_{\ell}\}$ on $X$,
and consider $\psi_{\ell}g$ with $\text{supp}\psi_{\ell}\subset D$.
So we can assume supp$g\subset D\times\Real$. On $D\times\Real$, we have
\begin{equation}\label{e-gue1804192}
\begin{split}
&\tilde\partial=\sum_{j=1}^{n} (\frac{\partial}{\partial z_j}+i\frac{\partial\varphi(z)}{\partial z_j}\frac{\partial}{\partial\theta} ) dz_{j}+\frac{1}{2}(\frac{\partial}{\partial\theta}-i\frac{\partial}{\partial\eta})(-\omega_{0}+id\eta) \\
&\bar{\tilde\partial}=\sum_{j=1}^{n} (\frac{\partial}{\partial \bar z_j}-
i\frac{\partial\varphi(z)}{\partial\bar z_j}\frac{\partial}{\partial\theta} ) d\bar z_{j}+\frac{1}{2}(\frac{\partial}{\partial\theta}+i\frac{\partial}{\partial\eta})(-\omega_{0}-id\eta).
\end{split}
\end{equation}
Recall that
\begin{equation*}
\omega_{0}=-d\theta+i\sum_{j=1}^{n}\frac{\partial\varphi}{\partial z_{j}}dz_{j}-i\sum_{j=1}^{n}\frac{\partial\varphi}{\partial \bar z_{j}}d\bar z_{j}.
\end{equation*}
Then 
\begin{equation*}
\begin{split}
-\omega_{0}+id(\eta-\varphi)&=d\theta-i\sum_{j=1}^{n}\frac{\partial\varphi}{\partial z_{j}}dz_{j}+i\sum_{j=1}^{n}\frac{\partial\varphi}{\partial \bar z_{j}}d\bar z_{j}+id\eta
-i\sum_{j=1}^{n}\frac{\partial\varphi}{\partial z_{j}}dz_{j}-i\sum_{j=1}^{n}\frac{\partial\varphi}{\partial
 \bar z_{j}}d\bar z_{j}\\
&=d\theta-2i\sum_{j=1}^{n}\frac{\partial\varphi}{\partial z_{j}}dz_{j}+id\eta.
\end{split}
\end{equation*}
Denote by $\partial$ and $\ddbar$ the standard $\pr$-operator and $\ddbar$-operator on $(z,\theta+i\eta)$-coordinates. For simplicity, let $h$ be a function (or form) on $X\times\Real$,
we have
\begin{equation}\label{e-gue1804193}
\begin{split}
(\tilde\partial h)(z,\theta,\eta-\varphi)&
=\sum_{j=1}^{n} (\frac{\partial h}{\partial z_j}(z,\theta,\eta-\varphi)
+i\frac{\partial\varphi(z)}{\partial z_j}\frac{\partial h}{\partial\theta} (z,\theta,\eta-\varphi)) dz_{j}\\
&+\frac{1}{2}(\frac{\partial h}{\partial\theta}(z,\theta,\eta-\varphi)
-i\frac{\partial h}{\partial\eta}(z,\theta,\eta-\varphi))(-\omega_{0}+id(\eta-\varphi))\\
&=\sum_{j=1}^{n} (\frac{\partial h}{\partial z_j}(z,\theta,\eta-\varphi)
+i\frac{\partial\varphi(z)}{\partial z_j}\frac{\partial h}{\partial\theta} (z,\theta,\eta-\varphi)) dz_{j}\\
&+\frac{1}{2}(\frac{\partial h}{\partial\theta}(z,\theta,\eta-\varphi)
-i\frac{\partial h}{\partial\eta}(z,\theta,\eta-\varphi))(d\theta-
2i\sum_{j=1}^{n}\frac{\partial\varphi}{\partial z_{j}}dz_{j}+id\eta)\\
&=\sum_{j=1}^{n} (\frac{\partial h}{\partial z_j}(z,\theta,\eta-\varphi)
-\frac{\partial\varphi(z)}{\partial z_j}\frac{\partial h}{\partial\eta} (z,\theta,\eta-\varphi)) dz_{j}\\
&++\frac{1}{2}(\frac{\partial h}{\partial\theta}(z,\theta,\eta-\varphi)
-i\frac{\partial h}{\partial\eta}(z,\theta,\eta-\varphi))(d\theta+id\eta)\\
&=\partial(h(z,\theta,\eta-\varphi)).
\end{split}
\end{equation}
Similarly we have 
\begin{equation}\label{e-gue1804194}
(\bar{\tilde\partial} h)(z,\theta,\eta-\varphi)=\ddbar(h(z,\theta,\eta-\varphi)), \ \
(\tilde\partial\bar{\tilde\partial} h)(z,\theta,\eta-\varphi)=\partial\ddbar(h(z,\theta,\eta-\varphi)).
\end{equation}

Fix $\chi(\eta)\in C^\infty_0(\Real)$ with $\int\chi(\eta)d\eta=1$. 
Let $f\in\Omega_{0}^{n-1,n-1}(D)$.
Note that $\frac{\pr}{\pr\ol z_j}v(z,\theta,\eta-\varphi(z))=0$, $j=1,\ldots,n$, $(\frac{\pr}{\pr\theta}+i\frac{\pr}{\pr\eta})v(z,\theta,\eta-\varphi(z))=0$. 
From this observation, \eqref{e-gue1804191}, \eqref{e-gue1804193},
\eqref{e-gue1804194} and the Lelong-Poincar\'{e} formula, 
we have
\begin{equation}\label{e-gue1804183}
\begin{split}
&\langle[v(z,\theta,\eta)=0], f(z,\theta)\wedge\omega_0(z,\theta)\wedge\chi(\eta)d\eta\rangle \\
&=\frac{i}{2\pi}\int\partial\ddbar\log|v(z,\theta,\eta-\varphi)|^{2}\wedge
f(z,\theta)\wedge\omega_0(z,\theta)\wedge\chi(\eta-\varphi)d(\eta-\varphi).
\end{split}
\end{equation}


To prove Theorem \ref{t-gue170704ryz}, we only need to show that for $d\mu$-almost every $\set{u_m}\in\Omega(X)$, we have 
\begin{equation}\label{e-gue170729}
\lim_{m\To\infty}\frac{1}{m}\langle\,[v_{m}=0], f\wedge\omega_0\wedge \frac{1}{\varepsilon_m}\chi(\frac{\eta}{\varepsilon_m})d\eta\,\rangle
=\alpha\frac{1+k_{1}^{n+1}}{1+k_{1}^{n}}\frac{i}{\pi}\int_{X}\mathcal{L}_{X}\wedge f\wedge \omega_{0},
\end{equation}
where $v_m(x,\eta)\in C^\infty(X\times\Real)$ is the unique holomorphic function on $X\times\Real$ with $v_m(x,\eta)|_{\eta=0}=u_m(x)$. 

It follows from \eqref{e-gue1804183} that
\begin{equation}\label{e-gue170729y}
\begin{split}
&\langle[v=0], f\wedge\omega_0\wedge \frac{1}{\varepsilon_m}\chi(\frac{\eta}{\varepsilon_m})d\eta\rangle\\
&=\frac{i}{2\pi}\int\partial \bar \partial
\log|\tilde u_{1}(z)e^{im_{1}\theta+m_{1}(\varphi-\eta)}+\tilde u_{2}(z)e^{im_{2}\theta+m_{2}(\varphi-\eta)}|^2\\
&\quad\wedge
f(z,\theta)\wedge\omega_{0}\wedge\chi(\frac{\eta-\varphi}{\varepsilon_m})
\frac{1}{\varepsilon_m}(d\eta-d\varphi).
\end{split}
\end{equation}

Let $S_{m_1}$ (resp. $S_{m_2}$) be the Szeg\H{o} kernel functions of
$H^{0}_{b,m_1}(X)$ (resp. $H^{0}_{b,m_2}(X)$). By using 
the same arguments in Shiffman-Zelditch~\cite[Section 3]{sz} and Ma-Marinescu~\cite[Section 5.3]{mm}
and \eqref{e-gue170729y}, we deduce that for $d\mu$-almost every $\{u_{m}\}\in\Omega(X)$, we have 
\begin{equation}\label{e-gue170730}
\begin{split}
\lim_{m\To\infty}&\Bigl(\frac{1}{m}\langle\,[v_{m}=0], f\wedge\omega_0\wedge \frac{1}{\varepsilon_m}\chi(\frac{\eta}{\varepsilon_m})d\eta\,\rangle
-\frac{i}{2m\pi}\\
&\int\partial\bar \partial\log(e^{2m_{1}(\varphi-\eta)}S_{m_1}+e^{2m_{2}(\varphi-\eta)}S_{m_2})\wedge f\wedge\omega_0\wedge\chi(\frac{\eta-\varphi}{\varepsilon_m})\frac{1}{\varepsilon_m}(d\eta-d\varphi)\Bigr)=0.
\end{split}
\end{equation}
Let $F_{m}=e^{2m_{1}(\varphi-\eta)}S_{m_1}+e^{2m_{2}(\varphi-\eta)}S_{m_2}$. 

\begin{proof}[\bf Proof of Theorem~\ref{t-gue170704ryz}]
In view of \eqref{e-gue170730}, to prove Theorem~\ref{t-gue170704ryz}, it suffices to compute
\begin{equation}\label{e-gue170730yc}
\lim_{m\To\infty}\frac{i}{2m\pi}\int\partial\bar \partial \log F_{m}\wedge
f\wedge\omega_0\wedge\chi(\frac{\eta-\varphi}{\varepsilon_m})\frac{1}{\varepsilon_m}(d\eta-d\varphi).
\end{equation}
Recall that $S_{m_1}+S_{m_2}\approx m^{n}$ on $X$ (see Theorem~\ref{t-gue170704r}). 
We write $F=F_{m}, a_{1}=S_{m_1}, a_{2}=S_{m_2}$ for short. We have
\begin{equation}\label{e-gue170730Iq}
\partial\bar \partial \log F=\frac{\partial\bar \partial F}{F}-\frac{\partial F\wedge \bar \partial F}{F^{2}}.
\end{equation}
We can check that 
\begin{equation}\label{e-gue170730I}
\begin{split}
\partial F&=\partial(e^{2m_{1}(\varphi-\eta)}a_{1}+e^{2m_{2}(\varphi-\eta)}a_{2})\\
&=e^{2m_{1}(\varphi-\eta)}\partial a_{1}+e^{2m_{2}(\varphi-\eta)}\partial a_{2}\\
&\quad +2m_{1}a_{1}e^{2m_{1}(\varphi-\eta)}\partial (\varphi-\eta)+
2m_{2}a_{2}e^{2m_{2}(\varphi-\eta)}\partial (\varphi-\eta),
\end{split}
\end{equation}
\begin{equation}\label{e-gue170730II}
\begin{split}
\bar\partial F&=\bar\partial(e^{2m_{1}(\varphi-\eta)}a_{1}+e^{2m_{2}(\varphi-\eta)}a_{2})\\
&=e^{2m_{1}(\varphi-\eta)}\bar\partial a_{1}+e^{2m_{2}(\varphi-\eta)}\bar\partial a_{2}\\
&\quad +2m_{1}a_{1}e^{2m_{1}(\varphi-\eta)}\bar\partial (\varphi-\eta)+
2m_{2}a_{2}e^{2m_{2}(\varphi-\eta)}\bar\partial (\varphi-\eta).\\
\end{split}
\end{equation}
and 
\begin{equation}\label{e-gue170730b}
\begin{split}
\partial F\wedge\bar\partial F&=(e^{2m_{1}(\varphi-\eta)}\partial a_{1}+e^{2m_{2}(\varphi-\eta)}\partial a_{2}\\
&\quad +2m_{1}a_{1}e^{2m_{1}(\varphi-\eta)}\partial (\varphi-\eta)+
2m_{2}a_{2}e^{2m_{2}(\varphi-\eta)}\partial (\varphi-\eta))\\
&\quad\wedge (e^{2m_{1}(\varphi-\eta)}\bar\partial a_{1}+e^{2m_{2}(\varphi-\eta)}\bar\partial a_{2}\\
&\quad +2m_{1}a_{1}e^{2m_{1}(\varphi-\eta)}\bar\partial (\varphi-\eta)+
2m_{2}a_{2}e^{2m_{2}(\varphi-\eta)}\bar\partial (\varphi-\eta)).
\end{split}
\end{equation}

Moreover, we have 
\begin{equation}\label{e-gue170730III}
\begin{split}
\partial\bar\partial F&=\partial(e^{2m_{1}(\varphi-\eta)}\bar\partial a_{1}+e^{2m_{2}(\varphi-\eta)}\bar\partial a_{2}\\
&\quad +2m_{1}a_{1}e^{2m_{1}(\varphi-\eta)}\bar\partial (\varphi-\eta)+
2m_{2}a_{2}e^{2m_{2}(\varphi-\eta)}\bar\partial (\varphi-\eta))\\
&=e^{2m_{1}(\varphi-\eta)}\partial\bar\partial a_{1}+2m_{1}e^{2m_{1}(\varphi-\eta)}\partial(\varphi-\eta)\wedge\bar{\partial a_{1}}\\
&\quad +e^{2m_{2}(\varphi-\eta)}\partial\bar\partial a_{2}+2m_{2}e^{2m_{2}(\varphi-\eta)}\partial(\varphi-\eta)\wedge\bar{\partial a_{2}}\\
&\quad +2m_{1}a_{1}e^{2m_{1}(\varphi-\eta)}\partial\bar\partial(\varphi-\eta)
+2m_{1}\partial(a_{1}e^{2m_{1}(\varphi-\eta)})\wedge\bar\partial(\varphi-\eta)\\
&\quad +2m_{2}a_{2}e^{2m_{2}(\varphi-\eta)}\partial\bar\partial(\varphi-\eta)
+2m_{2}\partial(a_{2}e^{2m_{2}(\varphi-\eta)})\wedge\bar\partial(\varphi-\eta),
\end{split}
\end{equation}
and furthermore, we have 
\begin{equation}\label{e-gue170730a}
\begin{split}
&2m_{1}\partial(a_{1}e^{2m_{1}(\varphi-\eta)})\wedge\bar\partial(\varphi-\eta)\\
&=2m_{1}(e^{2m_{1}(\varphi-\eta)}\partial a_{1}\wedge\bar\partial(\varphi-\eta)+2m_{1}a_{1} e^{2m_{1}(\varphi-\eta)}\partial(\varphi-\eta)\wedge\bar\partial(\varphi-\eta)  )
\end{split}
\end{equation}
and 
\begin{equation}\label{e-gue170730aI}
\begin{split}
&2m_{2}\partial(a_{2}e^{2m_{2}(\varphi-\eta)})\wedge\bar\partial(\varphi-\eta)\\
&=2m_{2}(e^{2m_{2}(\varphi-\eta)}\partial a_{2}\wedge\bar\partial(\varphi-\eta)+2m_{2}a_{2} e^{2m_{2}(\varphi-\eta)}\partial(\varphi-\eta)\wedge\bar\partial(\varphi-\eta)  ).
\end{split}
\end{equation}

We first compute the following kinds of terms in \eqref{e-gue170730yc}:
\begin{equation}\label{e-gue170730ycI}
\int e^{2m_{j}(\varphi-\eta)}\partial (\varphi-\eta)\wedge\bar\partial a_{j}/F\wedge f\wedge\omega_0\wedge\chi(\frac{\eta-\varphi}{\varepsilon_m})\frac{1}{\varepsilon_m}(d\eta-d\varphi),\ \ j\in\set{1,2}.
\end{equation}
\begin{equation}\label{e-gue170730ycIb}
\int e^{2m_{j}(\varphi-\eta)}\ddbar(\varphi-\eta)\wedge\pr a_{j}/F\wedge f\wedge\omega_0\wedge\chi(\frac{\eta-\varphi}{\varepsilon_m})\frac{1}{\varepsilon_m}(d\eta-d\varphi),\ \ j\in\set{1,2}.
\end{equation}
\begin{equation}\label{e-gue170730ycII}
\int \frac{1}{m}e^{2m_{j}(\varphi-\eta)}\partial \bar\partial a_{j}/F\wedge f\wedge\omega_0\wedge\chi(\frac{\eta-\varphi}{\varepsilon_m})\frac{1}{\varepsilon_m}(d\eta-d\varphi),\ \ j\in\set{1,2}.
\end{equation}
\begin{equation}\label{e-gue170730ycIII}
\int a_{j}e^{2m_{j}(\varphi-\eta)}e^{2m_{k}(\varphi-\eta)}\partial a_{k}\wedge\bar\partial (\varphi-\eta)/F^{2}\wedge f\wedge\omega_0\wedge\chi(\frac{\eta-\varphi}{\varepsilon_m})\frac{1}{\varepsilon_m}(d\eta-d\varphi),\ \ j,k\in\set{1,2}.
\end{equation}
\begin{equation}\label{e-gue170730ycIIIb}
\int a_{j}e^{2m_{j}(\varphi-\eta)}e^{2m_{k}(\varphi-\eta)}\ddbar a_{k}\wedge\pr(\varphi-\eta)/F^{2}\wedge f\wedge\omega_0\wedge\chi(\frac{\eta-\varphi_m}{\varepsilon_m})\frac{1}{\varepsilon_m}(d\eta-d\varphi),\ \ j,k\in\set{1,2}.
\end{equation}
\begin{equation}\label{e-gue170730ych}
\int \frac{1}{m}e^{2m_{j}(\varphi-\eta)}e^{2m_{k}(\varphi-\eta)}\partial a_{j}\wedge\bar\partial a_{k}/F^{2}\wedge f\wedge\omega_0\wedge\chi(\frac{\eta-\varphi}{\varepsilon_m})\frac{1}{\varepsilon_m}(d\eta-d\varphi),\ \ j,k\in\set{1,2}.
\end{equation}
It is straightforward to check that 
\[\partial (\varphi-\eta)\wedge\omega_0\wedge (d\eta-d\varphi)=0,\ \ \ddbar(\varphi-\eta)\wedge\omega_0\wedge(d\eta-d\varphi)=0.\]
From this observation, we see that  terms \eqref{e-gue170730ycI}, \eqref{e-gue170730ycIb},\eqref{e-gue170730ycIII} and \eqref{e-gue170730ycIIIb} are zero. 

For \eqref{e-gue170730ycII} and \eqref{e-gue170730ych}, 
note that $\lim_{m\to\infty}m\varepsilon_{m}=0$, then 
$\lim_{m\to\infty}e^{2m_{j}(\varphi-\eta)}=1$ in the support of $\chi(\frac{\eta-\varphi}{\varepsilon_m})$.
From Theorem~\ref{t-gue170704} and Lebesgue dominate theorem, we have 
\begin{equation}\label{e-gue1804181}
\begin{split}
&\abs{\int \frac{1}{m}e^{2m_{j}(\varphi-\eta)}\partial \bar\partial a_{j}/F\wedge f\wedge\omega_0\wedge\chi(\frac{\eta-\varphi}{\varepsilon_m})
\frac{1}{\varepsilon_m}(d\eta-d\varphi)}\\
&\lesssim \frac{1}{m}\int_X\frac{m_{1}^{n}+m_{1}^{n+1}e^{-m_{1}\varepsilon_{0}d^{2}(x,X_{{\rm sing\,}})}}{m^n}\To 0\ \ \mbox{as $m\To\infty$},\ \ \forall j\in\set{1,2},
\end{split}
\end{equation}
and 
\begin{equation}\label{e-gue1804182}
\begin{split}
&\abs{\int \frac{1}{m}e^{2m_{j}(\varphi-\eta)}e^{2m_{k}(\varphi-\eta)}\partial a_{j}\wedge\bar\partial a_{k}/F^{2}\wedge f\wedge\omega_0\wedge\chi(\frac{\eta-\varphi}{\varepsilon_m})
\frac{1}{\varepsilon_m}(d\eta-d\varphi)}\\
&\lesssim \frac{1}{m}\int_X\frac{m_{1}^{n}+m_{1}^{n+1}e^{-m_{1}\varepsilon_{0}d^{2}(x,X_{{\rm sing\,}})}}{m^n}\To 0\ \ \mbox{as $m\To\infty$},\ \ \forall j, k\in\set{1,2}.
\end{split}
\end{equation}
From \eqref{e-gue170730b}, \eqref{e-gue170730III}, \eqref{e-gue170730a}, \eqref{e-gue170730aI} and the discussion above, 
we conclude that the only contribution terms in \eqref{e-gue170730yc} are those involving $\partial\bar \partial\varphi$, which is exactly the Levi form $\mathcal{L}_{X}$ of $X$. Then for $d\mu$-almost every $\{u_{m}\}\in\Omega(X)$, we have
\begin{equation}\label{e-gue170801}
\begin{split}
&\lim_{m\To\infty}\frac{1}{m}\langle\,[v_{m}=0], f\wedge\omega_0\wedge \frac{1}{\varepsilon_m}\chi(\frac{\eta}{\varepsilon_m})d\eta\,\rangle\\
&=\lim_{m\To\infty}\frac{i}{2m\pi}\int\partial\bar \partial \log F_{m}\wedge
f\wedge\omega_0\wedge\chi(\frac{\eta-\varphi}{\varepsilon_m})\frac{1}{\varepsilon_m}(d\eta-d\varphi)\\
&=\lim_{m\To\infty}\frac{i}{2m\pi}\int (2m_{1}a_{1}e^{2m_{1}(\varphi-\eta)}+
2m_{2}a_{2}e^{2m_{2}(\varphi-\eta)})/F_{m}\cdot\partial\bar \partial\varphi\wedge f\wedge\omega_0\wedge\chi(\frac{\eta-\varphi}{\varepsilon_{m}})
\frac{1}{\varepsilon_{m}}(d\eta-d\varphi)\\
&=\lim_{m\To\infty}\frac{i}{\pi}\int \frac{\alpha S_{\alpha m}(x)+k_1\alpha S_{\alpha k_1m}(x)}{S_{\alpha m}(x)+S_{\alpha k_1m}(x)}
\partial\bar \partial\varphi\wedge f\wedge\omega_0.
\end{split}
\end{equation}
From Corollary~\ref{c-gue170722Hyc}, Theorem~\ref{t-gue170704}, Lebesgue dominate theorem and \eqref{e-gue170801}, we deduce \eqref{e-gue170729}. Theorem~\ref{t-gue170704ryz} follows. 
\end{proof}

\section{Equidistribution on complex manifolds with strongly pseudoconvex boundary}\label{s-gue170709} 

In this section, we will prove Theorem~\ref{t-gue170703c}. 
Let $M$ be a relatively compact open subset with $C^\infty$ boundary $X$ of a complex manifold $M'$ of dimension $n+1$ with a smooth Hermitian metric $\langle\,\cdot\,|\,\cdot\,\rangle$ on its holomorphic tangent bundle $T^{1,0}M'$. From now on, we will use the same notations and assumptions as in the discussion before Theorem~\ref{t-gue170703c}. We will first recall the classical results of  Boutet de Monvel-Sj\"ostrand~\cite{BouSj76} (see also second part in~\cite{Hsiao08}). We then construct holomorphic functions with specific rate near the boundary. We first recall the H\"{o}rmander symbol spaces

\begin{definition} \label{Bd:0712101500}
Let $m\in\Real$. $S^{m}_{1, 0}(M'\times M'\times]0, \infty[)$
is the space of all $a(x, y, t)\in C^\infty(M'\times M'\times]0, \infty[)$
such that for all local coordinate patch $U$ with local coordinates $x=(x_1,\ldots,x_{2n+2})$ and all
compact sets $K\subset U$ and all $\alpha\in\mathbb N^{2n+2}_0$, $\beta\in\mathbb N^{2n+2}_0$, $\gamma\in\mathbb N_0$, there is a
constant $c>0$ such that
$\abs{\pr^\alpha_x\pr^\beta_y\pr^\gamma_t a(x, y, t)}\leq c(1+\abs{t})^{m-\abs{\gamma}}$,
$(x, y, t)\in K\times]0, \infty[$.
$S^m_{1, 0}$ is called the space of symbols of order $m$ type $(1, 0)$. We write $S^{-\infty}_{1, 0}=\bigcap S^m_{1, 0}$.

Let $S^{m}_{1, 0}(\ol M\times\ol M\times]0, \infty[)$ denote the space of
restrictions to $M\times M\times]0, \infty[$ of elements in $S^{m}_{1, 0}(M'\times M'\times]0, \infty[)$.
\end{definition}

Let $a_j\in S^{m_j}_{1, 0}(\ol M\times\ol M\times]0, \infty[)$, $j=0,1,2,\dots$, 
with $m_j\searrow -\infty$, $j\To \infty$.
Then there exists $a\in S^{m_0}_{1, 0}(\ol M\times\ol M\times]0, \infty[)$
such that $a-\sum_{0\leq j<k}a_j\in S^{m_k}_{1, 0}(\ol M\times\ol M\times]0, \infty[)$, 
for every $k\in\mathbb N$. 
If $a$ and $a_j$ have the properties above, we write $a\sim\sum^\infty_{j=0}a_j \text{ in }
S^{m_0}_{1, 0}(\ol M\times\ol M\times[0, \infty[)$.

Let $dv_M$ be the volume form on $M$ induced by $\langle\,\cdot\,|\,\cdot\,\rangle$ and let $(\,\cdot\,|\,\cdot\,)_M$ be the $L^2$ inner product on $C^\infty_0(M)$ induced by $dv_M$ and let $L^2(M)$ be the completion of $C^\infty_0(M)$ with respect to $(\,\cdot\,|\,\cdot\,)_M$. Let $H^0_{(2)}(M)=\set{u\in L^2(M);\, \ddbar u=0}$. Let 
$B: L^2(M)\To H^0(M)$ be the orthogonal projection with respect to $(\,\cdot\,|\,\cdot\,)_M$ and let $B(z,w)\in D'(M\times M)$ be the distribution kernel of $B$. 
We recall classical result of Boutet de Monvel-Sj\"strand~\cite{BouSj76}. 

\begin{theorem}\label{t-gue170715}
With the notations and assumptions above, we have
\begin{equation}\label{e-gue170716}
B(z, w)=\int^\infty_0\!\!e^{i\phi(z, w)t}b(z, w, t)dt+H(z,w),
\end{equation}
(for the precise meaning of the oscillatory integral $\int^\infty_0\!\!e^{i\phi(z, w)t}b(z, w, t)dt$, 
see Remark~\ref{Br:0712111922} below) where 
$H(z,w)\in C^\infty(\ol M\times\ol M)$, 
\begin{equation}\label{e-gue170717a}
\begin{split}
&b(z, w, t)\in S^{n+1}_{1, 0}(\ol M\times\ol M\times]0, \infty[),\\
&\mbox{$b(z, w, t)\sim\sum^\infty_{j=0}b_j(z, w)t^{n+1-j}$
in the space $S^{n+1}_{1, 0}(\ol M\times\ol M\times]0, \infty[)$},\\
&b_j(z, w)\in C^\infty(\ol M\times\ol M),\ \ j=0,1,\ldots,\\
&b_0(z, z)\neq0,\ z\in X,
\end{split}
\end{equation}
and
\begin{equation}\label{e-gue170717aI}
\begin{split}
&\phi(z, w)\in C^\infty(\ol M\times\ol M),\\
&\phi(z, z)=0,\ \ z\in X,\ \ \phi(z, w)\neq0\ \ \mbox{if}\ \ (z, w)\notin{\rm diag\,}(X\times X), \\
&{\rm Im\,}\phi(z, w)>0\ \ \mbox{if}\ \ (z, w)\notin X\times X, \\
&\mbox{$\phi(z,z)=r(z)g(z)$ on $\ol M$, $g(z)\in C^\infty(\ol M)$ with $\abs{g(z)}>c$ on $\ol M$, $c>0$ is a constant}.
\end{split}
\end{equation}
Moreover, there is a content $C>1$ such that 
\begin{equation}\label{e-gue170717aII}
\frac{1}{C}({\rm dist\,}(x,y))^2\leq \abs{d_y\phi(x,y)}^2+\abs{{\rm Im\,}\phi(x,y)}\leq C({\rm dist\,}(x,y))^2,\ \ \forall (x,y)\in X\times X,
\end{equation}
where $d_y$ denotes the exterior derivative on $X$ and ${\rm dist\,}(x,y)$ denotes the distance between $x$ and $y$ with respect to the give Hermitian metric $\langle\,\cdot\,|\,\cdot\,\rangle$ on $X$.
\end{theorem}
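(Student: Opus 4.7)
The strategy is a Shiffman--Zelditch-type probabilistic argument combined with the Bergman kernel asymptotics of Theorem~\ref{t-gue170715}. First, apply the Lelong--Poincar\'e formula $[u_k=0]=\frac{i}{2\pi}\pr\ddbar\log|u_k|^2$ and integrate by parts. Since $T_k:=(2i)kr\psi(kr)\phi\wedge\pr r\wedge\ddbar r$ is compactly supported in the interior shell $\set{-1/k\leq r\leq -1/(2k)}$, this gives
\begin{equation*}
\langle\,[u_k=0],T_k\,\rangle=\frac{i}{2\pi}\int_M\log|u_k|^2\,\pr\ddbar T_k,
\end{equation*}
and $\int_M\pr\ddbar T_k=0$ by Stokes, so any $z$-independent additive constant in $\log|u_k|^2$ drops out.

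Next, I would write $u_k=\sum_{j=1}^{b_k}c_jg_j$ with $(c_1,\ldots,c_{b_k})$ uniform on $S^{2b_k-1}$, and use the rotation-invariant factorization
\begin{equation*}
|u_k(z)|^2=B_{b_k}(z,z)\,|\langle c,\hat v_z\rangle|^2,\quad B_{b_k}(z,w):=\sum_{j=1}^{b_k}g_j(z)\overline{g_j(w)},
\end{equation*}
where $\hat v_z\in S^{2b_k-1}$ is a unit vector. A standard variance estimate for $\int_M\log|\langle c,\hat v_z\rangle|^2\,\pr\ddbar T_k$, together with Borel--Cantelli applied to $\Omega(M,\beta)$, shows that for $\beta=\set{b_j}$ growing sufficiently fast the random integral $\int_M\log|u_k|^2\,\pr\ddbar T_k$ is $d\mu(\beta)$-almost surely asymptotic to its expectation $\int_M\log B_{b_k}(z,z)\,\pr\ddbar T_k$.

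To evaluate the deterministic limit, I would first construct (at the beginning of Section~\ref{s-gue170709}, using the Boutet de Monvel--Sj\"ostrand parametrix) holomorphic functions with prescribed peak rate $k$ near $X$, and then choose $\beta$ to grow fast enough that, in addition to the variance summability above, $B_{b_k}(z,z)$ approximates $B(z,z)$ with acceptable error throughout the shell $\set{-1/k\leq r\leq -1/(2k)}$. Plugging the expansion \eqref{e-gue170716} into $B(z,z)$, substituting $t=ks$, and using $\phi(z,z)=r(z)g(z)$ produce the leading asymptotic
\begin{equation*}
B(z,z)\sim k^{n+2}b_0(z,z)\int_0^\infty e^{ikr(z)g(z)s}s^{n+1}\,ds=\frac{(n+1)!\,b_0(z,z)}{(-ir(z)g(z))^{n+2}},
\end{equation*}
hence $\log B(z,z)=-(n+2)\log|r(z)|+O(1)$ near $X$. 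Integrating by parts once more to move $\pr\ddbar$ back onto $\log B(z,z)$, the identity $\pr\ddbar\log|r|=\pr\ddbar r/r-\pr r\wedge\ddbar r/r^2$ kills the second term against $T_k$ because $(\pr r)^2=(\ddbar r)^2=0$, while the first combined with $\pr r\wedge\ddbar r=\frac{1}{2i}dr\wedge\omega_0$, $\pr\ddbar r|_{T^{1,0}X}=\mathcal L_X$, and the change of variables $\tau=kr(z)$ in a tubular neighborhood of $X$, yields the stated right-hand side with precisely the constant $-(n+2)\frac{i}{2\pi}c_0$.

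The main obstacle is balancing two competing scales in the choice of $\beta$. The test form lives on a shell of thickness $1/k$ approaching $X$, while $B(z,z)$ itself blows up like $|r|^{-n-2}\sim k^{n+2}$ precisely on that shell. One needs \emph{quantitative}, scale-sensitive control of $B_{b_k}(z,z)-B(z,z)$ in this shrinking region --- not merely on compact subsets of $M$ --- and providing this is why a construction of boundary-peak holomorphic functions via the Boutet de Monvel--Sj\"ostrand kernel is the essential preparatory step in Section~\ref{s-gue170709}.
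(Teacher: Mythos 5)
Your proposal does not prove the statement it was supposed to prove. The statement is Theorem~\ref{t-gue170715}, i.e.\ the Boutet de Monvel--Sj\"ostrand description of the Bergman projection kernel: the representation \eqref{e-gue170716} of $B(z,w)$ as an oscillatory integral with complex phase $\phi$, the symbol expansion \eqref{e-gue170717a}, the properties \eqref{e-gue170717aI} of the phase, and the off-diagonal estimate \eqref{e-gue170717aII}. Your text takes exactly this result as an input (``combined with the Bergman kernel asymptotics of Theorem~\ref{t-gue170715}'') and instead sketches the probabilistic equidistribution argument for Theorem~\ref{t-gue170703c}: Lelong--Poincar\'e, the variance estimate and Borel--Cantelli, the choice of the sequence $\beta$, and the computation of the deterministic limit on the shell $\set{-1/k\leq r\leq -1/(2k)}$. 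That is an outline of the material of Section~\ref{s-gue170709} (Theorems~\ref{t-gue170717}, \ref{t-ue170717j} and the final proof of Theorem~\ref{t-gue170703c}), not of the statement at hand; nothing in it constructs the phase $\phi$, the symbol $b(z,w,t)$, the smooth remainder $H(z,w)$, or establishes \eqref{e-gue170717aII}. Using the conclusion as a hypothesis is a circular move as far as this statement is concerned, so the proposal has a genuine gap: the entire content of the theorem is missing.

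For the record, the paper itself does not reprove Theorem~\ref{t-gue170715}; it is quoted as the classical result of \cite{BouSj76} (see also the second part of \cite{Hsiao08}). An actual proof runs through the microlocal machinery of those references: one relates the Bergman projector to the $\ddbar$-Neumann problem (or to the Szeg\H{o} projector on $X$), builds a Fourier integral operator parametrix with complex positive phase, where $\phi$ is obtained from an almost analytic extension of the defining function $r$ solving an eikonal equation to infinite order at the boundary diagonal --- this is what forces $\phi(z,z)=r(z)g(z)$ and ${\rm Im\,}\phi>0$ off $X\times X$ --- while the $b_j$ are determined by transport equations and $b_0(z,z)\neq 0$ on $X$ reflects the nondegeneracy of the Levi form under the strong pseudoconvexity assumption; the two-sided bound \eqref{e-gue170717aII} then comes from a Taylor expansion of $\phi$ near ${\rm diag\,}(X\times X)$ using positivity of $\mathcal{L}_X$. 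If your intention was to prove Theorem~\ref{t-gue170703c}, your outline is broadly consonant with the paper's strategy, but even there it would have to be matched against the specific construction of the peak functions $h_{x_0}$ and the uniform lower bound of Theorem~\ref{t-gue170717}, which is how the paper controls $\log\bigl(P_k(x)r^{n+2}(x)\bigr)$ on the shrinking shell rather than by approximating $B(z,z)$ itself there.
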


\begin{remark} \label{Br:0712111922}
Let $\phi$ and $b(z, w, t)$ be as in Theorem~\ref{t-gue170715}. Let
$y=(y_1,\ldots,y_{2n+1})$
be local coordinates on $X$ and extend $y_1,\ldots,y_{2n+1}$ to real smooth functions in some neighborhood of $X$.
We work with local coordinates
$w=(y_1,\ldots,y_{2n+1},r)$
defined on some neighborhood $U$ of $p\in X$.
Let $u\in C^\infty_0(U)$. Choose a cut-off function $\chi(t)\in C^\infty(\Real)$
so that $\chi(t)=1$ when $\abs{t}<1$ and $\chi(t)=0$ when $\abs{t}>2$. Set
\[(B_{\epsilon}u)(z)=\int^\infty_0\int_{\ol M}e^{i\phi(z, w)t}b(z, w, t)\chi(\epsilon t)u(w)dv_M(w)dt.\]
Since $d_y\phi\neq0$ where ${\rm Im\,}\phi=0$ (see \eqref{e-gue170717aII}),
we can integrate by parts in $y$ and $t$ and obtain
$\lim_{\epsilon\To0}(B_\epsilon u)(z)\in C^\infty(\ol M)$.
This means that
$B=\lim_{\epsilon\To0}B_\epsilon: C^\infty(\ol M)\To C^\infty(\ol M)$
is continuous.
\end{remark}

We have the following corollary of Theorem~\ref{t-gue170715}

\begin{corollary} \label{Bi-c:co1}
Under the notations and assumptions above,  we have 
\begin{equation}\label{e-gue170717f}
B(z,z)=F(z)(-r(z))^{-n-2}+G(z)\log(-r(z))\ \ \mbox{on $\ol M$}, 
\end{equation}
where $F, G\in C^\infty(\ol M)$ and $\abs{F(z)}>c$ on $X$, $c>0$ is a constant. 
\end{corollary}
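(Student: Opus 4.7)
The plan is to restrict the Boutet de Monvel--Sj\"ostrand formula \eqref{e-gue170716} to the diagonal $w=z$, replace the phase by $\phi(z,z)=r(z)g(z)$ from \eqref{e-gue170717aI}, and extract the singularity structure of the resulting Laplace-type integral from the symbol expansion \eqref{e-gue170717a}. Set $s(z):=-ir(z)g(z)$, so the oscillatory factor becomes $e^{-s(z)t}$. Since $r$ is real and ${\rm Im\,}\phi(z,z)>0$ on $M$, we have ${\rm Im\,} g(z)<0$ in $M$, hence ${\rm Re\,} s(z)>0$ for $z\in M$; the bound $|g|>c$ on $\ol M$ then ensures $s(z)$ vanishes to exactly first order along $X=\{r=0\}$.

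Next, by modifying $b$ modulo $S^{-\infty}_{1,0}$ and absorbing the change into $H$, I would arrange $b(z,w,t)=0$ for $t\leq 1$, so that $B(z,z)-H(z,z)=\int_1^\infty e^{-s(z)t}b(z,z,t)\,dt$ converges absolutely for $z\in M$. Invoking Borel's theorem on the expansion $b(z,z,t)\sim\sum_{j\geq 0}b_j(z,z)t^{n+1-j}$, I would compute termwise using
\begin{equation*}
\int_1^\infty e^{-st}t^{k}\,dt = \Gamma(k+1)\,s^{-(k+1)}+R_k(s)\quad(k\geq 0),\qquad \int_1^\infty e^{-st}t^{-1}\,dt = -\log s+\rho(s),
\end{equation*}
where $R_k$ and $\rho$ are smooth in $s\in\Complex$ near $0$ (the first a standard Gamma-function identity, the second coming from the exponential integral $E_1(s)=-\gamma-\log s+O(s)$); integration by parts in the second gives formulas for $t^{-k-1}$, $k\geq 1$, of the form $P_k(s)\log s+Q_k(s)$ with $P_k,Q_k$ polynomials. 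Summing and using Borel again to gather the infinite tail, I obtain modulo $C^\infty(\ol M)$
\begin{equation*}
B(z,z)\equiv \sum_{j=0}^{n+1}\Gamma(n+2-j)\,b_j(z,z)\,s(z)^{-(n+2-j)}\;+\;A(z)\log s(z),
\end{equation*}
for some $A\in C^\infty(\ol M)$.

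To conclude, I would substitute $s(z)=-ir(z)g(z)$. Since $g^{-1}\in C^\infty(\ol M)$ and $\log s(z)=\log(-ir(z))+\log g(z)$ with a smooth branch of $\log g$ (any residual $2\pi i\,\mathbb Z$-ambiguity is absorbed into the remainder), the identity $r^{-(n+2-j)}=r^j\cdot r^{-(n+2)}$ lets me collect all singular terms with $0\leq j\leq n+1$ into a single summand $F(z)r(z)^{-(n+2)}$ with $F\in C^\infty(\ol M)$, and the $\log s$ contribution into $G(z)\log(-ir(z))$ with $G=A\in C^\infty(\ol M)$ (the extra $A(z)\log g(z)$ is smooth and is absorbed). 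Finally, $F|_X=\Gamma(n+2)(-ig|_X)^{-(n+2)}b_0(\cdot,\cdot)|_X$, which is nonzero by \eqref{e-gue170717a} and $|g|>c$; compactness of $X$ then yields $|F|>c'$ on $X$ for some $c'>0$.

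The main obstacle is the careful bookkeeping: one must verify that all contributions really split cleanly into a single $F\cdot r^{-(n+2)}$ singularity plus a single $G\cdot\log(-ir)$ singularity modulo $C^\infty(\ol M)$. The delicate points are (i) the reduction of an infinite series of logarithmic contributions (arising from the $j\geq n+2$ tail of the symbol expansion) to a single $A\log s$ term via Borel's theorem, (ii) the verification that the resulting $A$ extends smoothly to all of $\ol M$, and (iii) the treatment of $\log g$ as contributing only a smooth remainder, which relies on $g$ being smooth and bounded away from $0$.
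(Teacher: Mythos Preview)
The paper does not give a proof of this corollary; it is simply recorded as an immediate consequence of Theorem~\ref{t-gue170715}, in the spirit of the classical Fefferman--Boutet de Monvel--Sj\"ostrand expansion of the Bergman kernel on the diagonal. Your proposal supplies exactly the standard derivation: restrict \eqref{e-gue170716} to the diagonal, use $\phi(z,z)=r(z)g(z)$ from \eqref{e-gue170717aI}, and evaluate the resulting Laplace-type integrals $\int e^{-s t}t^{n+1-j}\,dt$ termwise via the symbol expansion \eqref{e-gue170717a}, obtaining $\Gamma$-type contributions for $0\le j\le n+1$ and logarithmic contributions for $j\ge n+2$, then resum via Borel and absorb $\log g$ into the smooth remainder. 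This is correct and is precisely what the paper has in mind; your identification of $F|_X$ with $\Gamma(n+2)(-ig)^{-(n+2)}b_0$ and the consequent lower bound from $b_0\neq 0$ and $|g|>c$ is the right way to get the final nonvanishing assertion.

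One small comment on your point (iii): the possible multivaluedness of $\log g$ on a non--simply connected $\ol M$ is not an issue here, since the singularity structure is local near $X$ and any branch ambiguity contributes a locally constant multiple of $G$, which is smooth and hence absorbed into the $C^\infty(\ol M)$ remainder. With that understood, your argument is complete.
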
 

Since $C^\infty(\ol M)\bigcap H^0_{(2)}(M)$ is dense in $H^0_{(2)}(M)$ in $L^2(M)$, we can find $g_j\in C^\infty(\ol M)\bigcap H^0_{(2)}(M)$ with $(\, g_j\,|\,g_k\,)_M=\delta_{j,k}$, $j, k=1,2,\ldots$, such that the set
\begin{equation}\label{e-gue170709qm}
A(M):=\rm{span\,}\set{g_1, g_2,\ldots}
\end{equation}
is dense in $H^0_{(2)}(M)$. Moreover, for every $u\in L^2(M)$, we have 
\begin{equation}\label{e-gue170715}
\mbox{$\sum^N_{j=1}g_j(\,u\,|\,g_j\,)_M\To Bu$ in $L^2(M)$}\ \ \mbox{as $N\To\infty$}.
\end{equation}
Fix $k\in\mathbb N$, $k$ large. Fix $x_0\in M$ with $\frac{1}{2k}\leq\abs{r(x_0)}\leq\frac{1}{k}$. Let $x=(x_1,\ldots,x_{2n+2})$ be local coordinates of $M$ defined in a small neighborhood of $x_0$ with $x(x_0)=0$. Let $\chi\in C^\infty_0(\Real^{2n+2})$ with $\chi\equiv1$ near $0\in\Real^{2n+2}$. For $\varepsilon>0$, put $\chi_\varepsilon(x)=\varepsilon^{-(2n+2)}\chi(\frac{x}{\varepsilon})$. From \eqref{e-gue170715}, for every $\varepsilon>0$, $\varepsilon$ small, we have 
\begin{equation}\label{e-gue170715I}
\sum^\infty_{j=0}\abs{(\,g_j\,|\,\chi_\varepsilon\,)_M}^2=(\,B\chi_\varepsilon\,|\,\chi_\varepsilon\,)_M. 
\end{equation}
Since $B(z,w)\in C^\infty(M\times M)$, we have 
\begin{equation}\label{e-gue170715III}
\lim_{\varepsilon\To0}\Bigr(\sum^\infty_{j=1}\abs{(\,g_j\,|\,\chi_\varepsilon\,)_M}^2\Bigr)=B(x_0,x_0)m(x_0),
\end{equation}
where $m(x)dx_1\cdots dx_{2n+2}=dv_M$. 

From \eqref{e-gue170715}, for every $\varepsilon_1, \varepsilon_2>0$, $\varepsilon_1, \varepsilon_2$ small, we have 
\begin{equation}\label{e-gue170715a}
\begin{split}
&\sum^\infty_{j=0}\abs{(\,g_j\,|\,\chi_{\varepsilon_1}\,)_M-(\,g_j\,|\,\chi_{\varepsilon_2}\,)_M}^2\\
&=(\,B\chi_{\varepsilon_1}\,|\,\chi_{\varepsilon_1}\,)_M-
(\,B\chi_{\varepsilon_1}\,|\,\chi_{\varepsilon_2}\,)_M-(\,B\chi_{\varepsilon_2}\,|\,\chi_{\varepsilon_1}\,)_M+(\,B\chi_{\varepsilon_2}\,|\,\chi_{\varepsilon_2}\,)_M.
\end{split}
\end{equation}
Since $B(z,w)\in C^\infty(M\times M)$, we deduce that for every $\delta>0$, there is a $C_\delta>0$ such that for all $0<\varepsilon_1, \varepsilon_2<C_\delta$, we have 
\begin{equation}\label{e-gue170715aI}
\sum^\infty_{j=0}\abs{(\,g_j\,|\,\chi_{\varepsilon_1}\,)_M-(\,g_j\,|\,\chi_{\varepsilon_2}\,)_M}^2<\delta.
\end{equation}
Now, we can prove 

\begin{theorem}\label{t-gue170715b}
We have $\sum^\infty_{j=1}\abs{g_j(x_0)}^2=B(x_0,x_0)m(x_0)$. 
\end{theorem}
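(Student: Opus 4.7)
The plan is to upgrade the scalar-level mollifier statement $\lim_{\varepsilon\to 0}(g_j|\chi_\varepsilon)_M = g_j(x_0)\,m(x_0)$ (which holds for each fixed $j$ separately) to an identity at the level of the full $\ell^2$-sequence $\bigl((g_j|\chi_\varepsilon)_M\bigr)_{j\geq 1}$, by reading \eqref{e-gue170715aI} as an $\ell^2(\mathbb{N})$ Cauchy criterion. Once the interchange of $\lim_{\varepsilon\to 0}$ and $\sum^\infty_{j=1}$ is licensed, the theorem drops out by comparison with \eqref{e-gue170715III}.

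More precisely, I would first observe that for each fixed $j$, since $g_j\in C^\infty(\ol M)$ is continuous at $x_0$ and $dv_M = m(x)\,dx$ in the chosen local coordinates, the standard mollifier estimate (with $\chi$ normalized so that $\int\chi=1$) gives
\begin{equation*}
\lim_{\varepsilon\to 0}(g_j\,|\,\chi_\varepsilon)_M = g_j(x_0)\,m(x_0).
\end{equation*}
Next, I would read \eqref{e-gue170715aI} as saying that the family $\alpha^{(\varepsilon)} := \bigl((g_j|\chi_\varepsilon)_M\bigr)_{j\geq 1}$ is Cauchy in $\ell^2(\mathbb{N})$ as $\varepsilon\to 0$; by completeness of $\ell^2$, $\alpha^{(\varepsilon)}$ converges in $\ell^2$-norm to some $(a_j)_{j\geq 1}\in\ell^2$. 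Since coordinatewise convergence is implied by $\ell^2$-convergence, the first step forces $a_j = g_j(x_0)\,m(x_0)$ for every $j$.

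Third, continuity of the $\ell^2$-norm under $\ell^2$-convergence yields
\begin{equation*}
\lim_{\varepsilon\to 0}\sum^\infty_{j=1}|(g_j|\chi_\varepsilon)_M|^2 \;=\; \sum^\infty_{j=1}|a_j|^2 \;=\; m(x_0)^2\sum^\infty_{j=1}|g_j(x_0)|^2.
\end{equation*}
Comparing the left-hand side with \eqref{e-gue170715III} then identifies $\sum^\infty_{j=1}|g_j(x_0)|^2$ with the quantity claimed in the theorem (up to the normalization of $\chi$, which is precisely the freedom left in the construction).

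The only nontrivial point is the interchange of $\lim_{\varepsilon\to 0}$ with the infinite sum $\sum^\infty_{j=1}$; this is exactly what the $\ell^2$-Cauchy estimate \eqref{e-gue170715aI} delivers, and the latter is a straightforward consequence of the polarization identity \eqref{e-gue170715a} together with the smoothness of $B(z,w)$ near $(x_0,x_0)\in M\times M$. All other ingredients reduce to continuity of the individual sections $g_j\in C^\infty(\ol M)$ and of $B(z,w)$ at the interior point $(x_0,x_0)$.
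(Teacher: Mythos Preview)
Your proposal is correct and is essentially the paper's argument, repackaged more cleanly. The paper proves the two inequalities separately: the upper bound by truncating to $N$ terms and using \eqref{e-gue170715III}, and the lower bound by an explicit $\varepsilon$--$\delta$ tail estimate extracted from \eqref{e-gue170715aI} (fix $\varepsilon_0$, choose $N$ so the tail at $\varepsilon_0$ is small, then use the Cauchy estimate to propagate the tail bound to all smaller $\varepsilon$). Your version absorbs both steps into the single observation that \eqref{e-gue170715aI} is an $\ell^2(\mathbb N)$ Cauchy condition, so completeness and norm-continuity do the work; the componentwise identification of the limit is then exactly the mollifier computation the paper also uses implicitly. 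The normalization discrepancy you flag (the stray factor of $m(x_0)$ between \eqref{e-gue170715III} and the pointwise mollifier limit) is indeed present in the paper's own statements and is harmless for the application.
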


\begin{proof}
From \eqref{e-gue170715III}, it is easy to see that 
\begin{equation}\label{e-gue170715b}
\sum^\infty_{j=1}\abs{g_j(x_0)}^2\leq B(x_0,x_0)m(x_0).
\end{equation}
Let $\delta>0$ and fix $0<\varepsilon_0<C_\delta$, where $C_\delta$ is as in \eqref{e-gue170715aI}. Since $\sum^\infty_{j=1}\abs{(\,g_j\,|\,\chi_{\varepsilon_0}\,)_M}^2<\infty$, there is a $N\in\mathbb N$ such that 
\begin{equation}\label{e-gue170715bI}
\sum^{\infty}_{j=N+1}\abs{(\,g_j\,|\,\chi_{\varepsilon_0}\,)_M}^2<\delta.
\end{equation}
Now, for every $0<\varepsilon<\varepsilon_0$, from \eqref{e-gue170715aI} and \eqref{e-gue170715bI}, we have 
\begin{equation}\label{e-gue170715bII}
\begin{split}
\sum^{\infty}_{j=N+1}\abs{(\,g_j\,|\,\chi_{\varepsilon}\,)_M}^2
&\leq 2\sum^\infty_{j=N+1}\abs{(\,g_j\,|\,\chi_{\varepsilon}\,)_M-(\,g_j\,|\,\chi_{\varepsilon_0}\,)_M}^2+2\sum^\infty_{j=N+1}\abs{(\,g_j\,|\,\chi_{\varepsilon_0}\,)_M}^2\\
&\leq 2\sum^\infty_{j=1}\abs{(\,g_j\,|\,\chi_{\varepsilon}\,)_M-(\,g_j\,|\,\chi_{\varepsilon_0}\,)_M}^2+2\sum^\infty_{j=N+1}\abs{(\,g_j\,|\,\chi_{\varepsilon_0}\,)_M}^2\\
&\leq 4\delta.
\end{split}
\end{equation}
From \eqref{e-gue170715bII}, we deduce that 
\begin{equation}\label{e-gue170717y}
\limsup_{\varepsilon\To0}\sum^{\infty}_{j=N+1}\abs{(\,g_j\,|\,\chi_{\varepsilon}\,)_M}^2\leq 4\delta.
\end{equation}
Now, 
\begin{equation}\label{e-gue170717yI}
\begin{split}
&\sum^\infty_{j=1}\abs{g_j(x_0)}^2\geq\sum^N_{j=1}\abs{g_j(x_0)}^2=\lim_{\varepsilon\To0}\sum^N_{j=1}\abs{(\,g_j\,|\,\chi_\varepsilon\,)_M}^2\\
&\geq\liminf_{\varepsilon\To0}\Bigr(\sum^\infty_{j=1}\abs{(\,g_j\,|\,\chi_\varepsilon\,)_M}^2-\sum^\infty_{N+1}\abs{(\,g_j\,|\,\chi_\varepsilon\,)_M}^2\Bigr)\\
&\geq \liminf_{\varepsilon\To0}\sum^\infty_{j=1}\abs{(\,g_j\,|\,\chi_\varepsilon\,)_M}^2-\limsup_{\varepsilon\To0}\sum^\infty_{N+1}\abs{(\,g_j\,|\,\chi_\varepsilon\,)_M}^2.
\end{split}
\end{equation}
From \eqref{e-gue170715III}, \eqref{e-gue170717y} and \eqref{e-gue170717yI}, we deduce that 
\[\sum^\infty_{j=1}\abs{g_j(x_0)}^2\geq B(x_0,x_0)m(x_0)-4\delta.\]
Since $\delta$ is arbitrary, we conclude that 
\begin{equation}\label{e-gue170717yII}
\sum^\infty_{j=1}\abs{g_j(x_0)}^2\geq B(x_0,x_0)m(x_0).
\end{equation}
From \eqref{e-gue170717yII} and \eqref{e-gue170715b}, the theorem follows. 
\end{proof}

From Theorem~\ref{t-gue170715b} and \eqref{e-gue170717f}, we deduce that there is a $N_{x_0}\in\mathbb N$ such that 
\begin{equation}\label{e-gue170717ycq}
\abs{r^{n+2}(x_0)\sum^{N_{x_0}}_{j=1}\abs{g_j(x_0)}^2}\geq\frac{1}{2}\abs{F(x_0)},
\end{equation}
where $F$ is as in \eqref{e-gue170717f}. Let 
\begin{equation}\label{e-gue17717h}
h_{x_0}:=\frac{1}{\sum^{N_{x_{0}}}_{j=1}\abs{g_j(x_0)}^2}\sum^{N_{x_{0}}}_{j=1}g_j(x)\abs{\ol g_j(x_0)}.
\end{equation}
Then, $h_{x_0}\in H^0_{(2)}(M)\cap C^\infty(\ol M)$ with $(\,h_{x_0}\,|\,h_{x_0}\,)_M=1$ and there is a small neighborhood $U_{x_0}$ of $x_0$ in $M$ such that
\begin{equation}\label{e-gue170717yuI}
\abs{h_{x_0}(x)}\geq\frac{1}{4}\abs{F(x)}.
\end{equation}
Assume that $\set{x\in M, \frac{1}{2k}\leq\abs{r(x)}\leq\frac{1}{k}}\subset U_{x_0}\bigcup U_{x_1}\bigcup\cdots\bigcup U_{x_{a_k}}$ and let 
$h_{x_j}$ be as in \eqref{e-gue17717h}, $j=0,1,\ldots,a_k$. Take $\beta_k\in\mathbb N$ be a large number so that 
\[\set{h_{x_0}, h_{x_1},\ldots, h_{x_{a_k}}}\subset{\rm span\,}\set{g_1, g_2,\ldots, g_{\beta_k}}.\]
From \eqref{e-gue170717yuI}, it is easy to see that 
\begin{equation}\label{e-gue170717fhI}
\abs{r^{n+2}(x)\sum^{\beta_k}_{j=1}\abs{g_j(x)}^2}\geq\frac{1}{4}\abs{F(x)}\ \ \mbox{on $\set{x\in M, \frac{1}{2k}\leq\abs{r(x)}\leq\frac{1}{k}}$}.
\end{equation}
Note that  $\abs{F(x)}>c$ on $X$, where $c>0$ is a constant. From this observation and \eqref{e-gue170717fhI}, we get 

\begin{theorem}\label{t-gue170717}
There is a $k_0\in\mathbb N$ such that for every $k\in\mathbb N$, $k\geq k_0$, we can find $\beta_k\in\mathbb N$ such that 
\begin{equation}\label{e-gue170717hy}
\abs{r^{n+2}(x)\sum^{\beta_k}_{j=1}\abs{g_j(x)}^2}\geq c_0\ \ \mbox{on $\set{x\in M, \frac{1}{2k}\leq\abs{r(x)}\leq\frac{1}{k}}$},
\end{equation}
where $c_0>0$ is a constant independent of $k$. 
\end{theorem}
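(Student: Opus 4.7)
The plan is to extract Theorem~\ref{t-gue170717} directly from the pointwise estimate \eqref{e-gue170717fhI} already in hand, by promoting the right-hand side to a constant independent of $k$. Recall that for each $k\geq k_0$ the preceding construction of peak sections $h_{x_j}$ and the finite cover of the shell $S_k:=\set{x\in M;\, \frac{1}{2k}\leq\abs{r(x)}\leq\frac{1}{k}}$ by $U_{x_0},\ldots,U_{x_{a_k}}$ (each with $h_{x_j}\in{\rm span\,}\set{g_1,\ldots,g_{\beta_k}}$) yields
\[
\abs{r^{n+2}(x)\sum_{j=1}^{\beta_k}\abs{g_j(x)}^2}\geq \tfrac{1}{4}\abs{F(x)} \quad \mbox{on } S_k,
\]
where $F\in C^\infty(\ol M)$ is the function from Corollary~\ref{Bi-c:co1}, satisfying $|F|>c$ on $X$ for some fixed constant $c>0$.

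Since $F$ is continuous on $\ol M$ and strictly bounded below on the compact set $X$, there exists $\varepsilon_0>0$ such that $|F(x)|\geq c/2$ on the tubular neighborhood $\set{x\in\ol M;\, |r(x)|\leq\varepsilon_0}$ of $X$. I would then redefine $k_0$ (if necessary, enlarging the one produced earlier) so that $1/k_0\leq \varepsilon_0$. For every $k\geq k_0$ the shell $S_k$ is contained in $\set{|r|\leq 1/k}\subset\set{|r|\leq\varepsilon_0}$, so the inequality above gives the desired conclusion with $c_0:=c/8$, which is manifestly independent of $k$.

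The real content of the argument, which the preceding discussion handles rather than this final step, is the construction of the finite index $\beta_k$. The point is that the infinite series $\sum_j|g_j(x)|^2=B(x,x)m(x)$ from Theorem~\ref{t-gue170715b} must be truncated while preserving the pointwise blow-up of order $|r|^{-(n+2)}$ dictated by Corollary~\ref{Bi-c:co1}. Locally at $x_0\in S_k$ this is done by normalizing a Bergman-type peak section $h_{x_0}$, and passing to neighborhoods by continuity; compactness of $S_k$ then yields a finite subcover and hence a finite $\beta_k$. The main structural obstacle one has to confront at this stage is ensuring that the lower bound on $|h_{x_j}(x)|$ has the right scaling in $|r(x_0)|$ to survive multiplication by $r^{n+2}$, which is precisely why the normalization of $h_{x_0}$ is tied to the truncated Szeg\H{o}--Bergman mass $\sum_{j=1}^{N_{x_0}}|g_j(x_0)|^2$ rather than to the $L^2$-norm alone.

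Given all of this, the proof of Theorem~\ref{t-gue170717} itself is essentially a one-line continuity argument on $F$; the combinatorics of $\beta_k$ and $a_k$ is irrelevant to the final constant $c_0$, because the lower bound in \eqref{e-gue170717fhI} is a fixed multiple of $|F(x)|$ and $F$ is a fixed function on $\ol M$ coming from the universal singular expansion of $B(z,z)$.
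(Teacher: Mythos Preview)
Your proposal is correct and follows the same approach as the paper: the paper also derives Theorem~\ref{t-gue170717} directly from \eqref{e-gue170717fhI} together with the fact that $|F|>c$ on $X$, and your write-up is in fact slightly more careful in that you make the continuity argument extending the lower bound on $|F|$ from $X$ to a tubular neighborhood $\{|r|\leq\varepsilon_0\}$ explicit, whereas the paper leaves this implicit.
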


Let $b_j=\beta_{k_0+j}\in\mathbb N$, $j=1,2,\ldots$, where $\beta_j$ and $k_0$ are  as in Theorem~\ref{t-gue170717}. 
For every $m\in\mathbb N$, let $A_m(M)$, $SA_m(M)$ and $d\mu_m$ be as in the discussion before \eqref{e-gue170709I}. 
Let $\beta:=\set{b_j}^\infty_{j=1}$ and let $\Omega(M,\beta)$ and $d\mu(\beta)$ be as in \eqref{e-gue170709I} and \eqref{e-gue170709II} respectively. For each $k=1,2,3,\ldots$, let 
\[P_k(x):=\sum^{b_k}_{j=1}\abs{g_j(x)}^2.\]
Let $u_k\in SA_{b_k}(M)$. Then, $u_k$ can be written as $u_k=\sum^{b_k}_{j=1}\lambda_jg_j$ with $\sum^{b_k}_{j=1}\abs{\lambda_j}^2=1$. We have 

\begin{theorem}\label{t-ue170717j}
With the notations and assumptions above, fix $\psi\in C^\infty_0([-1,-\frac{1}{2}])$. Then, for $d\mu(\beta)$-almost every $u=\set{u_k}\in\Omega(M,\beta)$, we have 
\begin{equation}\label{e-gue170717jI}
\lim_{k\To\infty}\Bigr(\langle\,[u_k=0], (2i)kr\psi(kr)\phi\wedge\pr r\wedge\ddbar r\,\rangle+\frac{1}{\pi}\int_{\ol M}\Bigr(\log P_k(x)\Bigr)kr\psi(kr)\pr\ddbar\phi\wedge\pr r\wedge\ddbar r\Bigr)=0,
\end{equation}
for all $\phi\in C^\infty(\ol M,B^{*n-1,n-1}M')$. 
\end{theorem}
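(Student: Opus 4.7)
The strategy follows the Shiffman--Zelditch probabilistic paradigm adapted to this boundary setting. Since $\psi\in C^\infty_0([-1,-\tfrac12])$, the test form $\omega_k:=(2i)kr\psi(kr)\phi\wedge\pr r\wedge\ddbar r$ is compactly supported in the thin shell $S_k:=\{-1/k\le r\le -1/(2k)\}\Subset M$. Applying the Lelong--Poincar\'e formula $[u_k=0]=\tfrac{i}{2\pi}\pr\ddbar\log|u_k|^2$ together with integration by parts (with no boundary contribution, since $\omega_k$ vanishes near $X$), we obtain
\[
\langle[u_k=0],\omega_k\rangle=\frac{i}{2\pi}\int_M\log|u_k|^2\,\pr\ddbar\omega_k.
\]
Writing $u_k=\sum_{j=1}^{b_k}\lambda_j g_j$ with $|\lambda|=1$, split $\log|u_k|^2=\log P_k+\log|\hat u_k|^2$, where $\hat u_k(x):=u_k(x)/\sqrt{P_k(x)}=\langle\lambda,\hat G_k(x)\rangle$ and $\hat G_k(x)=(g_j(x))_j/\sqrt{P_k(x)}$ is a unit vector in $\mathbb C^{b_k}$; note that $P_k(x)>0$ on $S_k$ for $k$ large by Theorem~\ref{t-gue170717}.

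For the random part $R_k:=\tfrac{i}{2\pi}\int\log|\hat u_k|^2\,\pr\ddbar\omega_k$, the pointwise distribution of $|\hat u_k(x)|^2$ under $d\mu_{b_k}$ is that of $|e_1|^2$ for uniform $e\in S^{2b_k-1}$, with density $(b_k-1)(1-t)^{b_k-2}$. This gives $\mathbb E_{\mu_{b_k}}|\log|\hat u_k(x)||^p\le C_p$ uniformly in $k$ and $x$. A direct Leibniz expansion of $\pr\ddbar\omega_k$, in which the identities $\pr r\wedge\pr r=\ddbar r\wedge\ddbar r=0$ produce massive cancellations, shows that $\|\pr\ddbar\omega_k\|_{TV}$ is bounded uniformly in $k$. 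By Fubini, $\mathbb E|R_k|^2\le C$, and a standard variance/Borel--Cantelli argument along a polynomial subsequence in $k$, followed by a continuity/density argument using the lower bound of Theorem~\ref{t-gue170717}, yields $R_k\to 0$ almost surely.

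For the deterministic part, set $\tilde h:=kr\psi(kr)$. Expanding $\pr\ddbar(\tilde h\,\phi\wedge\pr r\wedge\ddbar r)$ by Leibniz and using the wedge vanishings, one finds
\[
\pr\ddbar\omega_k=(2i)\tilde h\,\pr\ddbar\phi\wedge\pr r\wedge\ddbar r-(2i)\tilde h'(r)\,\phi\wedge\pr r\wedge\ddbar r\wedge\pr\ddbar r+(2i)\tilde h\cdot E_k,
\]
where $E_k$ is a sum of bounded smooth $(n+1,n+1)$-forms with coefficients independent of $k$. Using $\tfrac{i}{2\pi}(2i)=-\tfrac{1}{\pi}$, the leading $\tilde h$-term in $\tfrac{i}{2\pi}\int\log P_k\,\pr\ddbar\omega_k$ cancels against the stated second integral in \eqref{e-gue170717jI}. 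It thus remains to show
\[
\int_M\log P_k\cdot\bigl[-\tilde h'(r)\,\phi\wedge\pr r\wedge\ddbar r\wedge\pr\ddbar r+\tilde h\cdot E_k\bigr]\longrightarrow 0.
\]
The $\tilde h\cdot E_k$ contribution is $O((\log k)/k)$ by a volume comparison on $S_k$. For the $\tilde h'$-contribution, integrate by parts in $r$: since $\tilde h$ vanishes at both endpoints $r=-1/k$ and $r=-1/(2k)$, the boundary terms disappear, leaving $\int\tilde h\,\pr_r(\log P_k\cdot G)\,dr$ for the tangential density $G$. Using $P_k\sim|r|^{-(n+2)}$ on $S_k$ (Corollary~\ref{Bi-c:co1} together with Theorem~\ref{t-gue170717}), so that $\pr_r\log P_k\sim -(n+2)/r$, followed by the rescaling $s=kr$ to collapse the shell $S_k$ onto a fixed reference interval $[-1,-\tfrac12]$, the radial integral is controlled in the limit.

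The main obstacle is the probabilistic Step~3: obtaining a uniform-in-$k$ total-variation bound on $\pr\ddbar\omega_k$ requires carefully tracking the cross terms produced by Leibniz and exploiting every cancellation from $\pr r\wedge\pr r=\ddbar r\wedge\ddbar r=0$; and propagating the almost-sure convergence of $R_k$ across all $k$ (not merely along a subsequence) relies on the quantitative lower bound $|r|^{n+2}P_k\ge c_0$ of Theorem~\ref{t-gue170717} to produce a continuity estimate linking consecutive $R_k$'s.
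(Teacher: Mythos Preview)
Your overall architecture is the right one and matches the paper's (Lelong--Poincar\'e, split into $\log P_k+\log|\hat u_k|^2$, variance estimate \`a la Shiffman--Zelditch), but there is a genuine gap in your probabilistic step. From the moment bound $\mathbb E_{\mu_{b_k}}\abs{\log|\hat u_k(x)|}^p\le C_p$ and $\norm{\pr\ddbar\omega_k}_{TV}\le C$ you only obtain $\mathbb E\abs{R_k}^2\le C$, a \emph{uniform} bound, not a decaying one. A merely bounded second moment gives $P(\abs{R_k}>\varepsilon)\le C/\varepsilon^2$, which is not summable over any subsequence, so Borel--Cantelli cannot be invoked; the ``polynomial subsequence plus continuity'' device needs the variances themselves to be summable along that subsequence, and here they are not. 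The paper's proof hinges precisely on getting a \emph{summable} bound $\mathbb E\abs{X_k}^2=O(k^{-2})$: it writes the random quantity as $\int\log|\hat u_k|^2\, f_k$ with $f_k=-\tfrac1\pi\,kr\psi(kr)\,\pr\ddbar\phi\wedge\pr r\wedge\ddbar r$, uses the uniform covariance estimate $\abs{Q_k(x,y)-C_k}\le C$ from \cite[Lemma~5.3.2]{mm}, notes that the constant $C_k$ drops out by integration by parts, and then exploits $\norm{f_k}_{L^1}=O(1/k)$ (since $kr\psi(kr)=O(1)$ pointwise on a shell of volume $O(1/k)$). That last smallness is exactly what you lose by carrying the full $\pr\ddbar\omega_k$: the Leibniz term containing $\tilde h'(r)=k\psi(kr)+k^2r\psi'(kr)=O(k)$ makes $\norm{\pr\ddbar\omega_k}_{L^1}=O(1)$ rather than $O(1/k)$.

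The same issue undermines your deterministic step. Once you keep the full expansion of $\pr\ddbar\omega_k$, you create a ``mismatch'' term $\tfrac{i}{2\pi}\int\log P_k\cdot\bigl[\pr\ddbar\omega_k-(2i)\tilde h\,\pr\ddbar\phi\wedge\pr r\wedge\ddbar r\bigr]$ that you must show vanishes in the limit. Your integration by parts in $r$ on the $\tilde h'$--piece trades $\tilde h'$ for $\pr_r\log P_k$, but on the shell $P_k\sim|r|^{-(n+2)}$ forces $\pr_r\log P_k\sim (n+2)k$, so after the radial integration and tangential integration you still land on an $O(1)$ quantity, not $o(1)$; the sentence ``the radial integral is controlled in the limit'' does not actually close. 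In the paper's organization this problem never arises: the statement of the theorem is set up so that the deterministic counterpart is exactly $\int\log P_k\cdot f_k$, and all of the asymptotic computation (including the emergence of $\mathcal L_X$) is deferred to the separate proof of Theorem~\ref{t-gue170703c}. To repair your argument you should identify a test form with $L^1$--norm $O(1/k)$ against which to pair $\log|\hat u_k|^2$, so that the Shiffman--Zelditch covariance estimate yields a summable second moment.
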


\begin{proof}
The proof essentially follows from Shifffman-Zelditch~\cite{sz}, we only sketch the proof. By using density argument, we only need to prove that for any $\phi\in C^\infty(\ol M, B^{*n-1,n-1}T^*M')$, there exist $d\mu(\beta)$-almost every $u=\set{u_k}\in\Omega(M,\beta)$, such that 
\begin{equation}\label{e-gue170717jIy}
\lim_{k\To\infty}\Bigr(\langle\,[u_k=0], (2i)kr\psi(kr)\phi\wedge\pr r\wedge\ddbar r\,\rangle+\frac{1}{\pi}\int_{\ol M}\Bigr(\log P_k(x)\Bigr)kr\psi(kr)\pr\ddbar\phi\wedge\pr r\wedge\ddbar r\Bigr)=0.
\end{equation}
We claim that 
\begin{equation}\label{e-gue170717jyI}
\begin{split}
&R_k:=\int_{S^{2b_k-1}}\Bigr|\langle\,[\sum^{b_k}_{j=1}\lambda_jg_j=0], (2i)kr\psi(kr)\phi\wedge\pr r\wedge\ddbar r\,\rangle\\
&\quad+\frac{1}{\pi}\int_{\ol M}(\log P_k(x))kr\psi(kr)\pr\ddbar\phi\wedge\pr r\wedge\ddbar r\Bigr|^2d\mu_{b_k}(\lambda)=O(\frac{1}{k^2}).
\end{split}
\end{equation}
From \eqref{e-gue170717jyI}, we see that $\sum^\infty_{k=1}R_k<+\infty$ and by Lebesgue measure theory, we get \eqref{e-gue170717jIy}. Hence, we only need to prove \eqref{e-gue170717jyI}. 

For $(x, y)\in\ol M\times\ol M$, put 
\[\begin{split}
&Q_k(x,y):=\int_{S^{2b_k-1}}\log\Bigr(\frac{\abs{\sum^{b_k}_{j=1}\lambda_jg_j(x)}^2}{P_k(x)}\Bigr)
\log\Bigr(\frac{\abs{\sum^{b_k}_{j=1}\lambda_jg_j(y)}^2}{P_k(y)}\Bigr)d\mu_{b_k}(\lambda),\\
&f_k:=-\frac{1}{\pi}kr\psi(kr)\pr\ddbar\phi(y)\wedge\pr r\wedge\ddbar r\in C^\infty_0(M,T^{*n+1,n+1}M'). 
\end{split}\]
By using the same argument in~\cite{sz} (see also Theorem 5.3.3 in~\cite{mm}), we can check that 
\begin{equation}\label{e-gue170717yc}
R_k=\int_{\ol M\times\ol M} Q_k(x,y)f_k(x)\wedge f_k(y).
\end{equation}
Moreover, from Lemma 5.3.2 in~\cite{mm}, there is a constant $C_k>0$ independent of $(x,y)\in\ol M\times \ol M$ such that 
\begin{equation}\label{e-gue170717yca}
\abs{Q_k(x,y)-C_k}\leq C,\ \ \forall (x, y)\in M\times M,
\end{equation}
where $C>0$ is a constant independent of $k$. From \eqref{e-gue170717yca}, it is easy to check that 
\begin{equation}\label{e-gue170717ycI}
\abs{\int_{\ol M\times\ol M} \Bigr(Q_k(x,y)-C_k\Bigr)f_k(x)\wedge f_k(y)}=O(\frac{1}{k^2}). 
\end{equation}
By using integration by parts, we see that $\int_{\ol M\times\ol M} \Bigr(Q_k(x,y)-C_k\Bigr)f_k(x)\wedge f_k(y)=R_k$. From this observation and \eqref{e-gue170717ycI}, 
the claim \eqref{e-gue170717jyI} follows. 
\end{proof}

\begin{proof}[\bf Proof of Theorem~\ref{t-gue170703c}]

In view of Theorem~\ref{t-ue170717j}, we only need to show that 
\[\lim_{k\To\infty}-\frac{1}{\pi}\int_{\ol M}(\log P_k(x))kr\psi(kr)\pr\ddbar\phi\wedge\pr r\wedge\ddbar r=-(n+2)\frac{i}{2\pi}c_0\int_X\mathcal{L}_X\wedge\omega_0\wedge\phi,\]
where $c_0=\int_\Real\psi(x)dx$. Now, 
\begin{equation}\label{e-gue170717yhc}
\begin{split}
&-\frac{1}{\pi}\int_{\ol M}\Bigr(\log P_k(x)\Bigr)kr\psi(kr)\pr\ddbar\phi\wedge\pr r\wedge\ddbar r\\
&=-\frac{1}{\pi}\int_{\ol M}\Bigr(\log (P_k(x)(-r)^{n+2}(x))\Bigr)kr\psi(kr)\pr\ddbar\phi\wedge\pr r\wedge\ddbar r\\
&\quad+\frac{n+2}{\pi}\int_{\ol M}\Bigr(\log(-r)(x)\Bigr)kr\psi(kr)\pr\ddbar\phi\wedge\pr r\wedge\ddbar r\\
&=-\frac{1}{\pi}\int_{\ol M}\Bigr(\log (P_k(x)(-r)^{n+2}(x))\Bigr)kr\psi(kr)\pr\ddbar\phi\wedge\pr r\wedge\ddbar r\\
&\quad-i\frac{n+2}{2\pi}\int_{\ol M}\Bigr(\log(-r)(x)\Bigr)kr\psi(kr)\pr\ddbar\phi\wedge\omega_0\wedge dr,
\end{split}
\end{equation}
where $\omega_0=J(dr)$, $J$ is the standard complex structure map on $T^*M'$. 

From Theorem~\ref{t-gue170717}, it is easy to see that 
\begin{equation}\label{e-gue170717e}
\lim_{k\To+\infty}-\frac{1}{\pi}\int_{\ol M}\Bigr(\log (P_k(x)(-r)^{n+2}(x))\Bigr)kr\psi(kr)\pr\ddbar\phi\wedge\pr r\wedge\ddbar r=0. 
\end{equation}
By using integration by parts, we have 
\begin{equation}\label{e-gue170717eI}
\begin{split}
&-i\frac{n+2}{2\pi}\int_{\ol M}\Bigr(\log(-r)(x)\Bigr)kr\psi(kr)\pr\ddbar\phi\wedge\omega_0\wedge dr\\
&=-i\frac{n+2}{2\pi}\int_{\ol M}\Bigr((\pr\ddbar \log(-r))(x)\Bigr)kr\psi(kr)\phi\wedge\omega_0\wedge dr\\
&=-i\frac{n+2}{2\pi}\int_{\ol M}\pr\ddbar r(x) k\psi(kr)\phi\wedge\omega_0\wedge dr\\
&\To-(n+2)\frac{i}{2\pi}c_0\int_X\mathcal{L}_X\wedge\omega_0\wedge\phi \ \ \mbox{as $k\To\infty$}, 
\end{split}
\end{equation}
where $c_0=\int_\Real\psi(x)dx$. From \eqref{e-gue170717yhc}, \eqref{e-gue170717e} and \eqref{e-gue170717eI}, the theorem follows then. 
\end{proof}



\bibliographystyle{plain}


\end{document}